\documentclass[reqno,11pt]{amsart}
\usepackage{amsfonts, amsmath, amssymb, amsthm, color}
  \usepackage{amsfonts}
\usepackage{amssymb}
\usepackage{amsmath}
\usepackage{amsthm}
\usepackage{latexsym}
\usepackage{graphicx}
\usepackage{layout}
\usepackage{color}
\usepackage{geometry}
\usepackage{amsopn, enumitem}
\usepackage{palatino}

\allowdisplaybreaks[1]

\oddsidemargin=0.3in

 \evensidemargin=0.2in
\baselineskip=16pt
%\addtolength{\textwidth}{1cm}

%\addtolength{\oddsidemargin}{-0.3cm}
\addtolength{\textheight}{1cm}
%\addtolength{\topmargin}{0.2cm}
%\addtolength{\eve
%\addtolenght{\textheight}{0.5cm}

\newtheorem{thm}{Theorem}[section]
\newtheorem{lemma}[thm]{Lemma}
\newtheorem{prop}[thm]{Proposition}

\newtheorem{rmk}[thm]{Remark}

\theoremstyle{definition}
 \numberwithin{equation}{section}

\newcommand{\cal}{\mathcal }
   
   \newcommand{\N}{\mathbb{N}}
\newcommand{\D}{\Delta}
\newcommand{\rr}{\mathbb{R}}
\newcommand{\R}{\mathbb{R}}
\newcommand{\al}{\alpha}
\newcommand{\de}{\delta}
 
\newcommand{\la}{\lambda}
\newcommand{\into}{\int_\Omega}
 \newcommand{\e}{\varepsilon}
 \renewcommand{\(}{\left(}
\renewcommand{\)}{\right)}
\renewcommand{\[}{\left[}
\renewcommand{\]}{\right]}
\newcommand{\beq}{\begin{equation}}
\newcommand{\eeq}{\end{equation}}
\def\bbm[#1]{\mbox{\boldmath $#1$}}
\begin{document}

\title[Asymmetric blow-up for the  $SU(3)$ Toda System]{Asymmetric blow-up for the  $\bbm[SU(3)]$ Toda System}
\author{Teresa D'Aprile}
\address[Teresa D'Aprile] {Dipartimento di Matematica, Universit\`a di Roma ``Tor
Vergata", via della Ricerca Scientifica 1, 00133 Roma, Italy.}
\email{daprile@mat.uniroma2.it}

\author{Angela Pistoia}
\address[Angela Pistoia] {Dipartimento SBAI, Universit\`{a} di Roma ``La Sapienza", via Antonio Scarpa 16, 00161 Roma, Italy}
\email{pistoia@dmmm.uniroma1.it}

\author{David Ruiz}
\address[David Ruiz]{Departamento de An\'{a}lisis Matem\'{a}tico, Granada, 18071 Spain.}
\email{daruiz@ugr.es}
\thanks{The authors have been supported by the Gruppo Nazionale per l'Analisi Matematica, la Probabilit\`a e le lore application (GNAMPA) of the Istituto Nazionale di Alta Matematica (IndAM)}
\thanks{The first and the second authors have been supported by the Italian PRIN Research Project 2012  \textit{Aspetti variazionali e perturbativi nei problemi differenziali nonlineari}.}
\thanks{The third author has been supported by the Spanish Ministry of
Science and Innovation under Grant MTM2011-26717 and by J.
Andalucia (FQM 116).}

\begin{abstract}
We consider the so-called Toda system in a smooth planar domain under homogeneous Dirichlet boundary conditions.
We prove the existence of a continuum of solutions for which both
components blow-up at the same point. This blow-up behavior is
asymmetric, and moreover one component includes also a
certain global mass. The proof uses singular perturbation methods.

\noindent {\bf Mathematics Subject Classification 2010:} 35J20, 35J57,
35J61

\noindent {\bf Keywords:} Toda system, blowing-up solutions,
finite-dimensional reduction

\end{abstract}

\maketitle

\section{Introduction}

In this paper we consider the following version of the $SU(3)$ \textit{Toda system} on a smooth bounded domain $\Omega \subset \R^2$:

\begin{equation}\label{eq:e-1}
  \left\{
      \begin{aligned}&- \D u_1 = 2 \rho_1 \frac{e^{u_1}}{\int_{\Omega}e^{u_1}} - \rho_2 \frac{e^{u_2}}{\int_{\Omega}e^{u_2}}&  \hbox{ in }&  \Omega,\\
  &   - \D u_2 = 2 \rho_2 \frac{e^{u_2}}{\int_{\Omega}e^{u_2}} - \rho_1 \frac{e^{u_1}}{\int_{\Omega}e^{u_1}}&  \hbox{ in }&  \Omega, \\
    &  \ u_1=u_2=0 &  \hbox{ on }& \partial \Omega.
  \end{aligned}
    \right. \end{equation}
Here $\rho_1$, $\rho_2$ are positive constants. This problem, and its counterpart posed on compact surfaces of $\R^3$,  has been very much studied in the literature. The Toda system has a
close relationship with geometry, since it can be seen as the
Frenet frame of holomorphic curves in $\mathbb{CP}^N$ (see
\cite{guest}). Moreover, it arises in the study of the non-abelian
Chern-Simons theory in the self-dual case, when a scalar Higgs field
is coupled to a gauge potential, see \cite{dunne, tar, yys}.

Problem \eqref{eq:e-1} can also be seen as a natural generalization to systems   of the classical mean field equation. With respect to
the scalar case, the Toda system presents some analogies but also
some different aspects, which have attracted the attention of a lot
of mathematical research in recent years. Existence for the Toda
system has been studied from a variational point of view in
\cite{bjmr, cheikh, jw, mruiz}, whereas blowing-up solutions have
been considered in \cite{ao, lyan, lwzao-GAFA, mpwei, osuzuki},
for instance.

The blow-up analysis for the solutions to \eqref{eq:e-1}  was
performed in \cite{jlw}; let us explain it in some detail. Assume
that $u_n=({u_1}_n, {u_2}_n)$ is a blowing-up sequence of solutions of
\eqref{eq:e-1} with $({\rho_1}_n, {\rho_2}_n)$ bounded. Then, there
exists a finite blow-up set $S=\{p_1, \dots ,p_k\} \subset \Omega$
such that the solutions are bounded away from $S$. Concerning the
points $p_i$, let us define the local masses:

$$ \sigma_i = \lim_{r \to 0} \lim_{n \to +\infty} {\rho_i}_n \frac{\int_{B(p,r)} e^{{u_i}_n}}{\int_{\Omega} e^{{u_i}_n}}.$$

Then, the following scenarios are possible:

\begin{enumerate}
\item[a)] Partial blow-up: $(\sigma_1,\ \sigma_2)=(4\pi, 0)$ or $(\sigma_1,\ \sigma_2)=(0,4\pi)$. In such case, only one component is blowing up, and its profile is related to the entire solution of the Liouville problem in $\R^2$.

\item[b)] Asymmetric blow-up: $(\sigma_1,\ \sigma_2)=(4\pi, 8\pi) $ or $(\sigma_1,\ \sigma_2)=(8\pi, 4\pi) $. In this case, both components blow up and the local masses are different.

\item[c)] Full blow-up: $(\sigma_1, \ \sigma_2)=(8\pi, 8\pi)$. In this case, both components blow up and the local masses are equal.
\end{enumerate}

As a consequence of this study, the set of solutions is compact
for  any ${\rho} \in  (\R^+)^2 \setminus \mathcal{C}$, where

$$ \mathcal{C} = \left (4 \pi \N \times \R^+ \right ) \cup \left (\R^+ \times 4 \pi \N \right ).$$

See \cite{bmancini, jlw}. 
In other words, if blow-up occurs, at least one component ${u_i}_n$
is quantized, and ${\rho_i}_n \to 4 k \pi$ for some $k \in \N$.

Existence results of blowing-up solutions for the Toda system have
been found in \cite{ao, mpwei, lyan}, which concern partial
blow-up, asymmetric blow-up and full blow-up, respectively. In
those papers, $\rho_n$ converges to a single point of
$\mathcal{C}$.

Our starting point is the following observation: \emph{in the Toda
system one expects the existence of continua of families of
blowing-up solutions}. Indeed, if the Leray-Schauder degree of two
adjacent squares of $\R^2 \setminus \mathcal{C}$ is different,
then there must be blowing-up solutions for all points ${\rho}$ in
the common side.

In our preceding paper \cite{noi} we found continua of solutions
exhibiting partial blow-up. The same type
of solutions have been independently found in \cite{lin-wei-bo},
where the authors use them to compute the degree for the Toda
System when $\min \{ \rho_1, \ \rho_2\} < 8 \pi $.

In the present paper we
prove the existence of continua of solutions which develop
asymmetric blow-up. Indeed, given $\rho \in (4\pi, 8\pi)$, we are
able to find solutions for values $({\rho_1}_n, {\rho_2}_n) \to (8\pi,
\rho)$ or, analogously, $({\rho_1}_n, {\rho_2}_n)  \to (\rho, 8\pi)$
(see Figure 1).

\begin{figure}[h]
\centering
\includegraphics[width=0.5\linewidth]{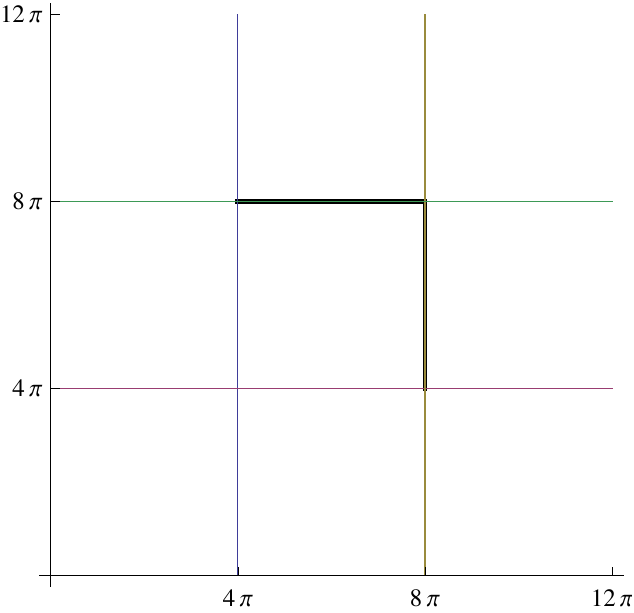}
\caption{ We find blowing-up solutions for which $(\rho_1, \rho_2)$
converges to any point of the two marked segments (excluding their endpoints).}
\end{figure}

We will assume throughout the paper that $\Omega$ is $k-$symmetric for some $k>2$
($k \in \N$), i.e.

\begin{equation}\label{ksym}
x\in\Omega\;\; \Longleftrightarrow\;\;   \Re_k(x)
\in\Omega,\quad \hbox{where}\quad
\Re_k(x):=\(\begin{matrix}\cos{2\pi\over k}&\sin{2\pi\over k}\\\\
-\sin{2\pi\over k}&\cos{2\pi\over k}\\
\end{matrix}\)\cdot x, \quad k>2.\end{equation}

\newpage
 In this paper we prove the following theorem.
\begin{thm} \label{teo} Let $\Omega$ be $k$-symmetric according to \eqref{ksym} and assume $0 \in \Omega$. Then, for
any $\rho \in (4\pi, 8\pi)$, there exists a family of
blowing-up solutions $({u_1}_\lambda, {u_2}_\lambda)$ of \eqref{eq:e-1} for $\lambda
\in (0, \lambda_0)$.

Such a  family has a unique blowing-up point at the
origin as $\la \to 0$, and $(\sigma_1, \sigma_2)=(4\pi, 8\pi)$
(asymmetric blow-up). Moreover, the corresponding values
$({\rho_{1}}_\lambda, {\rho_{2}}_\lambda)$ satisfy $${\rho_{1}}_\lambda=\rho,\qquad {\rho_{2}}_\lambda\to 8\pi.$$

\medskip
Concerning the asymptotic behavior of  the solutions, if we
make the change of variable \beq\label{change}u_1=2v_1-v_2,\;\; u_2= 2v_2-v_1,\eeq then
the following holds:

\begin{enumerate}
\item ${v_1}_\la(x)=\big (
-\log(\delta_1^2 + |x|^2) + 4 \pi H(x,0)\big ) + \frac 1 2 z(x) +
o(1)$ in $H^1(\Omega)$-sense, where\footnote{We use the notation $\sim$ to denote quantities which in the limit $\la\to 0^+$ are of the same order. }

\begin{equation} \label{difference2}
\delta_1=\delta_1(\la) \sim\sqrt{\lambda}\;\; \mbox{ as } \lambda \to 0;\end{equation}

\item ${v_2}_\la(x) = -\log(\delta_2^4 + |x|^4) + 8 \pi H(x,0) + o(1)$
in $H^1(\Omega)$-sense, where

\begin{equation} \label{difference}
\delta_2= \delta_2(\la) \sim \sqrt[4]{\lambda}\;\; \mbox{ as } \lambda \to 0.
\end{equation}

\end{enumerate}
Here  $H(x,y)$ denotes  the regular part of the Green's function and $z$ is the unique solution to the mean field equation
\begin{equation}\label{mf0}
\left\{\begin{aligned}&\Delta z+ 2(\rho-4\pi) \displaystyle{e^{z}\over\into e^{z}}=0 &\ \mbox{in} &\ \Omega, \\
& z =0 &\ \mbox{on} &\ \partial \Omega.
\end{aligned}
\right.
\end{equation}
\end{thm}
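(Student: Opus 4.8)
The plan is to construct the solutions by a finite-dimensional Lyapunov--Schmidt reduction, working throughout in the subspace of $k$-symmetric functions so that the concentration point is forced to be the origin and all translation modes are automatically suppressed. After the change of variable \eqref{change} the system decouples at the level of the Laplacian,
\[
-\Delta v_1 = \rho_1 \frac{e^{2v_1-v_2}}{\int_\Omega e^{2v_1-v_2}}, \qquad -\Delta v_2 = \rho_2 \frac{e^{2v_2-v_1}}{\int_\Omega e^{2v_2-v_1}},
\]
and this is the form on which I would build the ansatz. I would freeze $\rho_1=\rho$ from the start, treat $\lambda$ (tied to the fast scale by $\delta_2\sim\lambda^{1/4}$) as the continuation parameter, and leave $\delta_1$ and $\rho_2$ as unknowns to be fixed by the reduction.

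First I would set up the approximate solution $W_1 = PU_{\delta_1} + \tfrac12 z$ and $W_2 = P V_{\delta_2}$, where $U_{\delta_1}(x)=-\log(\delta_1^2+|x|^2)$ is the standard Liouville bubble of mass $4\pi$, $V_{\delta_2}(x)=-\log(\delta_2^4+|x|^4)$ is the singular (``degree two'') Liouville bubble of mass $8\pi$, $P$ denotes the projection onto $H_0^1(\Omega)$ obtained by subtracting the harmonic extension of the boundary values (which produces the corrections $4\pi H(\cdot,0)$ and $8\pi H(\cdot,0)$), and $z$ is the mean-field solution of \eqref{mf0}. A direct computation gives $-\Delta U_{\delta_1}=\tfrac{4\delta_1^2}{(\delta_1^2+|x|^2)^2}$ and $-\Delta V_{\delta_2}=\tfrac{16|x|^2\delta_2^4}{(\delta_2^4+|x|^4)^2}$. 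The scale separation $\delta_1\sim\sqrt\lambda \ll \delta_2\sim\lambda^{1/4}$ is the mechanism that makes the ansatz consistent: on the slow scale $|x|\sim\delta_1$ one has $e^{-v_2}\sim\delta_2^4$, so the first equation reduces to the regular Liouville profile, whereas on the fast scale $|x|\sim\delta_2$ one has $e^{-v_1}\sim|x|^2$, so the second equation sees the cone weight $|x|^2$ that pins $V_{\delta_2}$ as a singular Liouville bubble with $\alpha=1$. I would also check here that $\tfrac12 z$ carries exactly the residual global mass $\rho-4\pi$, which is what forces $\rho_1=\rho$.

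Next I would estimate the error $S(W_1,W_2)$ produced by the ansatz in a weighted $L^\infty$ (or $H_0^1$-dual) norm adapted to the two scales, and then develop the linear theory, which I expect to be \textbf{the main obstacle}. The linearized operator around each bubble has a nontrivial bounded kernel: the radial dilation mode is unavoidable and will be handled by the reduction, but the singular bubble $V_{\delta_2}$ with $\alpha=1$ also possesses second-harmonic kernel elements behaving like $\cos 2\theta,\ \sin 2\theta$. These are precisely the modes annihilated by the $k$-symmetry with $k>2$, and verifying that restricting to $k$-symmetric functions removes them---so that the linearized operator becomes uniformly invertible, up to the one-dimensional dilation mode per bubble and modulo a possible $|\log\lambda|$ loss, on the symmetric orthogonal complement---is the crux of the argument. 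Once this coercivity estimate is in hand, a contraction mapping solves the projected nonlinear problem and yields small remainders $\phi_1,\phi_2$.

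Finally I would reduce to the finite-dimensional system obtained by requiring the projections onto the two dilation modes to vanish. Computing the leading terms of these two equations gives algebraic relations among $\delta_1,\delta_2,\rho_2$; inserting the scaling $\delta_1\sim\sqrt\lambda$, $\delta_2\sim\lambda^{1/4}$ and solving via the implicit function theorem determines $\delta_1(\lambda)$ and $\rho_2(\lambda)\to 8\pi$ while $\rho_1=\rho$ remains fixed, thus producing the continuum for $\lambda\in(0,\lambda_0)$. Undoing \eqref{change} and reading off the local masses then gives $(\sigma_1,\sigma_2)=(4\pi,8\pi)$ together with the asymptotics \eqref{difference2}--\eqref{difference}.
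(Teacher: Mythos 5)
Your ansatz, scales, and symmetry mechanism coincide with the paper's: undoing \eqref{change}, your $W_1=PU_{\delta_1}+\tfrac12 z$, $W_2=PV_{\delta_2}$ is exactly the approximation \eqref{answ} (indeed $Pw_1=2PU_{\delta_1}$ and $Pw_2=2PV_{\delta_2}$), the scales $\delta_1\sim\sqrt{\lambda}$, $\delta_2\sim\lambda^{1/4}$ agree with \eqref{delta12}, and your observation that $k$-symmetry with $k>2$ kills the $\cos\frac{\alpha}{2}\theta$, $\sin\frac{\alpha}{2}\theta$ modes is precisely Proposition \ref{esposito}. The functional-analytic scheme, however, differs genuinely: the paper performs \emph{no} finite-dimensional reduction. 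It fixes $\delta_1,\delta_2$ explicitly as functions of $\lambda$ by \eqref{delta12}, replaces the second mean-field term by $\lambda e^{u_2}$ (so that $\rho_2=\lambda\int_\Omega e^{u_2}$ is read off a posteriori), and proves in Proposition \ref{inv} that the \emph{full} linearized operator is invertible on $\mathcal{H}_k\times\mathcal{H}_k$ with norm $O(|\log\lambda|)$; the radial dilation modes, which do belong to the symmetric class, are excluded not by orthogonality conditions but by the testing arguments of Steps 3--4, which exploit the precise choice \eqref{delta12} and the global--local coupling to force $\gamma_1=\gamma_2=0$. Your route (free parameters $(\delta_1,\rho_2)$, projected problem, reduced $2\times 2$ system solved by the implicit function theorem) is a legitimate alternative in principle, but it shifts the burden onto computing the reduced equations and verifying the nondegeneracy of their Jacobian, which you only assert.

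The genuine gap is in your linear theory. You list as obstructions only the kernel elements attached to the two bubbles (translations and second harmonics, killed by symmetry; dilations, to be handled by the reduction). But the linearization of the first equation also contains the linearized \emph{mean-field} operator around $z$: by Lemma \ref{aux}, $2\rho\, e^{W_1}/\int_\Omega e^{W_1}\approx e^{w_1}+2(\rho-4\pi)e^{z}/\int_\Omega e^{z}$, so a bounded sequence $\phi_{1n}$ may carry a nontrivial macroscopic profile $g$ solving the linear problem in assumption \ref{H}. Such a $g$ is invisible both to the symmetry (it is $k$-symmetric) and to orthogonality against the dilation modes (which live at scale $\delta_1$), so neither ingredient of your scheme can remove it; this is exactly what Step 2 of the paper's Proposition \ref{inv} must rule out, and it is ruled out only by the nondegeneracy of $z$, valid by \cite{blin, suzuki} because $2(\rho-4\pi)<8\pi$. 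Your proposal never invokes the uniqueness/nondegeneracy of $z$, so the claimed uniform invertibility on the symmetric orthogonal complement is unjustified, and your reduced system has no free parameter that could absorb a macroscopic kernel if one were present. Note that this is also the \emph{only} place where the hypothesis $\rho<8\pi$ enters the proof; an outline that never uses that hypothesis beyond fixing $\rho_1=\rho$ cannot be complete.
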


Let us give a couple of comments on the assumptions of Theorem \ref{teo}. For $\rho \in (4\pi, 8\pi)$, problem \eqref{mf0} admits a unique
solution which is also nondegenerate, as has been proved in
\cite{blin, suzuki}. Existence, uniqueness and nondegeneracy are
the reasons for the restriction $\rho <8 \pi$. Moreover, the symmetry requirement in Theorem \ref{teo} is used to rule out
the degeneracy of the radial solution of the singular Liouville
problem, see Proposition \ref{esposito} below.

Up to our knowledge, the only paper dealing with the existence of
asymmetric blow-up for the Toda system is \cite{mpwei}. In the construction of  \cite{mpwei}, $\rho_n
\to (4\pi, 8\pi)$, that is, there is no global mass. Our arguments
follow some of the ideas of that paper, but some interesting
differences have arose in our study. Observe that ${v_1}_\la$ contains a
peak around the origin which behaves as a solution of the regular
Liouville problem, suitably rescaled. Moreover, ${v_2}_\la$ is also
blowing-up at the origin at a lower speed. In a certain sense,
$e^{{v_1}_\la}$ acts as a Dirac delta for ${v_2}_\la$, and hence the limit
profile of ${v_2}_\la$ is the solution of a singular Liouville equation. Finally,
${v_1}_\la$ contains also a macroscopic part, $z(x)$, which yields the
global mass of the first component. This is one of the novelties
with respect to \cite{mpwei}. At this scale the concentration
effects of ${v_1}_\la$ and ${v_2}_\la$ cancel, and hence $z$ takes the form of
a solution of a regular Liouville problem posed in $\Omega$.

A second difference is that our two scales of concentration
(represented by the parameters $\delta_1$ and $\delta_2$) are
\emph{different from those in} \cite{mpwei}. This choice has been
forced by the presence of the global mass, and implies that
$\int_{\Omega} e^{{u_1}}$ remains bounded. This feature has another
interesting implication; if we define $\tilde{u}_1=u_1 + \log
\frac{\rho_1}{\int_{\Omega} e^{u_1}}$, $\tilde{u}_2 = u_2 + \log
\frac{\rho_1}{\int_{\Omega} e^{u_1}}$, we obtain solutions of the problem:
$$\left\{
    \begin{array}{lr}
      - \D \tilde{u}_1 = 2 e^{\tilde{u}_1}- e^{\tilde{u}_2} \ \ x \in \Omega,\\
     - \D \tilde{u}_2 = 2 e^{\tilde{u}_2}- e^{\tilde{u}_1}  \ \  x \in \Omega, \\
      \ \int_{\Omega} e^{\tilde{u}_i} < +\infty.
    \end{array}
    \right.$$
Those solutions are an example in which the singular set
for both components reduces to the origin but only the second
component diverges to $-\infty$ outside the origin. In other
words, the generalization of the classical Brezis-Merle result
\cite{breme} cannot involve both components in the Toda system.

The proofs use singular perturbation methods, which is based on the construction of suitable approximate solutions and on the study of
the invertibility of the linearized operator. This study is a third
difference with respect to \cite{mpwei}. Here, the first component
has a dual behavior, global and local, which implies an
interesting coupling between global and local terms, making the
whole proof more involved.

The rest of the paper in organized as follows. Section 2 is
devoted to some preliminary results, notation, and the definition
of our approximating solution. Moreover, a more general version of
Theorem \ref{teo} is stated there (see Theorem \ref{main}). The
error up to which the approximating solution solves our problem  is estimated in Section 3. In Section 4 we prove the solvability of the linearized problem. Finally, in Section 5, we
prove the existence result by a contraction mapping argument, and
we conclude the proof of Theorem \ref{teo} and Theorem \ref{main}.

\section{Preliminaries and statement of the main result}
In this section we will provide the \textit{ansatz} for solutions of problem \eqref{eq:e-1} and
 we will state our main result, which is a more general version of
Theorem \ref{teo}.

Motivated by the symmetry of the domain in assumption \eqref{ksym}, we consider symmetric functions, i.e., functions satisfying
\begin{equation}\label{even} u= u \circ \Re_k,\quad  \mbox{where $ \Re_k $ is defined in
\eqref{ksym}}.\end{equation}

We define:

$$ {\cal H}_k:=\left\{u\in {H}^1_0(\Omega):\ u \mbox{ satisfies  \eqref{even}} \right\}.$$

In order to construct our solutions, we will use the solution
$z$ to the problem:

\begin{equation}\label{mf}
\left\{\begin{aligned}
&\Delta z+ 2(\rho-4\pi) \displaystyle{e^{z}\over\into e^{z}}=0& \ \mbox{in }& \ \Omega, \\
& z =0& \ \mbox{on }& \ \partial \Omega.
\end{aligned}
\right.
\end{equation}

We shall need a nondegeneracy assumption on such
solution, in the following form:

\begin{enumerate}[label=(H), ref=(H)]
\item \label{H} Problem \eqref{mf} is solvable in ${\cal H}_k$ and the
solution (if not unique, at least one of them) is nondegenerate.
In other words, the linear problem
 $$\left\{\begin{aligned}
&\Delta \psi + 2(\rho-4\pi) \displaystyle{e^{z}\psi\over\into e^{z}dx}-2(\rho-4\pi) \displaystyle{e^{z}\into e^{z}\psi dx\over\(\into e^{z}dx\)^2}=0 \ & \mbox{in } &\ \Omega, \\
 &\psi =0 \ &\mbox{on } & \ \partial \Omega.
\end{aligned}
\right. $$
 admits only the trivial solution in the space ${\cal H}_k$.
\end{enumerate}

\begin{rmk} \label{ok} Problem \eqref{mf} always admits a solution if $\rho < 8 \pi$,
which is easily found as a minimizer of its corresponding energy
functional. Moreover, the solution is nondegenerate in this case,
even without symmetry restrictions (see \cite{suzuki} for the case
of a simply connected domain and \cite{blin} for the general
case). If $\rho \geq 8 \pi$ and $\Omega$ is the disk it is
well-known that there is no solution of \eqref{mf}. For a non
simply connected domain $\Omega$, instead, problem \eqref{mf}
admits a solution for all $\rho \neq 4 \pi (n+1)$, $n \in \N$, as
shown in \cite{clin} (see also \cite{djadli, dm} for a variational
approach). In this case, though, one expects nondegeneracy results
only for \emph{generic} domains $\Omega$.

\end{rmk}

In the rest of the paper we shall consider the following version
of the Toda system, with fixed $\rho\in (4\pi, 8\pi)$ and sufficiently small $\lambda>0$:
\begin{equation}
\label{s} \left\{\begin{aligned}
&\Delta u_1+ 2\rho{e^{u_1}\over\into e^{u_1}}-\lambda e^{u_2}=0& \quad \mbox{in } &  \Omega, \\
&\Delta u_2+ 2 \lambda e^{u_2}- \rho{e^{u_1}\over\into e^{u_1}}=0 &\quad \mbox{in }&  \Omega, \\
&u_1=u_2 =0 &\quad \mbox{on }&  \partial \Omega.
\end{aligned}
\right.
\end{equation}

 We now give a construction of a suitable approximate solution for \eqref{s}. To this aim, for $\al\ge2$, let us introduce the radially symmetric solutions of the singular Liouville problem $$
-\Delta w=|x|^{\al-2}e^w\quad \hbox{in}\;\; \rr^2,\qquad
\int_{\R^2} |x|^{\al-2}e^{w(x)}dx<+\infty.
$$
which are given by the one-parameter family of functions
$$
w^\al_\de(x):=\log 2\al^2{\de^\al\over\(\de^\al+|x|^\al\)^2}\quad
x\in\rr^2,\ \de>0.
$$
The following quantization property holds: \beq \label{quantum} \int_{\R^2}
|x|^{\al-2}e^{w_\de^\al(x)}dx = 4 \pi \alpha .\eeq
To obtain a better first approximation, we need to modify the functions $w_\de^\al$   in order to satisfy the zero boundary condition. Precisely, we consider the projections $P w_\de^\al $ onto the space $H^1_0(\Omega)$ of
$w_\de^\al$, where the projection  $P:H^1(\R^N)\to H^1_0(\Omega)$ is
defined as the unique solution of the problem
$$
 \Delta P u=\Delta u\quad \hbox{in}\ \Omega,\qquad  P u=0\quad \hbox{on}\ \partial\Omega.
$$

 We choose as initial approximation the following \textit{ansatz}:
$$W_\la=({W_1}_\la, {W_2}_\la),$$
\begin{equation}\label{answ}\begin{aligned} &
{W_1}_\la(x):= P w_1(x)-{1\over2} P w_2(x) +z(x),\\ & {W_2}_\la(x):=
P w_2(x)-{1\over2} P w_1(x) -{1\over2}z(x), \end{aligned}\end{equation}
where \begin{equation}\label{alfa}w_i(x):=w^{\al_i}_{\de_i}(x)\;\;\hbox{ with }
 \al_1:=2,\ \al_2:=4,
\end{equation}
and the values $\delta_i=\delta_i(\la)$ are defined as:

\begin{equation} \label{delta12} \delta_1 = \frac{1}{8} \sqrt{\frac{(\rho-4\pi) \lambda}{\int_{\Omega} e^{z} dx}} e^{6\pi
H(0,0)+\frac{z(0)}{4}},\qquad  \delta_2 = \frac{1}{2}\sqrt[4]\lambda
e^{3\pi H(0,0)- \frac{z(0)}{8}}. \end{equation} Here $H(x,y)$ denotes the regular part of the Green's function of $-\Delta$ over $\Omega$ under homogeneous Dirichlet boundary conditions, namely
$$H(x,y)=G(x,y)-\frac{1}{2\pi}\log\frac{1}{|x-y|}.$$

By the maximum principle we easily deduce the following asymptotic expansion
\beq\label{pro-exp}\begin{aligned}
 P  w_i(x)=& w_i(x)-\log\(2\al_i^2\de_i^{\al_i}\)+4\pi\al_i H(x,0)+O\(\de_i^{\al_i}\)\\ =&-2\log\(\de_i^{\al_i}+|x|^{\al_i}\)+4\pi\al_i H(x,0)+O\(\de_i^{\al_i}\)
\end{aligned}\eeq uniformly for $x\in \Omega$.

We shall look for a solution to \eqref{s} in a small neighbourhood of the first approximation, namely a solution of the form
 $$\({u_1}_\la, {u_2}_\la\)=W_\la+ {\phi}_\la,$$ where the rest term
$\phi_\la:=\({\phi_1}_\la, {\phi_2}_\la\)$ is small in
$H^1(\Omega)$-norm.

We are now in the position to state  the main theorem of the paper.

\begin{thm} \label{main} Let $\Omega$ be $k$-symmetric according to \eqref{ksym} and $0 \in \Omega$.
Assume that $\rho > 4 \pi$ and condition \ref{H} holds. Then, there
exists $\la_0>0$ such that for any $\la \in (0,\la_0)$ there
is $\phi_\la \in {\cal H}_k \times {\cal H}_k$ such that the couple $
({W_1}_\la+ {\phi_1}_\la, {W_2}_\la + {\phi_2}_\la)$ solves problem
\eqref{s}. Moreover, for any fixed $\e>0$,
$$\| \phi_\la \|_{(H^1_0(\Omega))^2}\leq \la^{\frac14- \e}\;\;\hbox{ for } \la \hbox{ sufficiently small}.$$
\end{thm}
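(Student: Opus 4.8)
The plan is to set up a fixed-point scheme for the remainder and solve it by a contraction argument inside the symmetric space $\mathcal{H}_k \times \mathcal{H}_k$, which is preserved by \eqref{s} thanks to the $k$-symmetry \eqref{ksym} of $\Omega$. Writing the left-hand sides of \eqref{s} as a nonlinear operator $\mathcal{S} = (\mathcal{S}_1, \mathcal{S}_2)$ and seeking a solution of the form $(u_1, u_2) = W_\lambda + \phi$, I would expand
$$\mathcal{S}(W_\lambda + \phi) = \mathcal{S}(W_\lambda) + \mathcal{L}_\lambda[\phi] + \mathcal{Q}_\lambda[\phi],$$
where $\mathcal{L}_\lambda = \mathcal{S}'(W_\lambda)$ is the linearized operator (nonlocal, because of the terms $e^{u_1}/\into e^{u_1}$), $\mathcal{E}_\lambda := \mathcal{S}(W_\lambda)$ is the error of the ansatz, and $\mathcal{Q}_\lambda$ collects the superlinear remainder. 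Inverting the Laplacians through the Green operator turns $\mathcal{S}(W_\lambda+\phi)=0$ into the fixed-point equation $\phi = \mathcal{L}_\lambda^{-1}\big(-\mathcal{E}_\lambda - \mathcal{Q}_\lambda[\phi]\big)$, to be solved in a small ball of $\mathcal{H}_k \times \mathcal{H}_k$.

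The first step (Section 3) is to estimate the error $\mathcal{E}_\lambda$. Using the projection expansion \eqref{pro-exp}, I would check that to leading order the Liouville terms generated by $Pw_1$ and $Pw_2$ match the exponential right-hand sides; the precise constants in \eqref{delta12} are tuned exactly so that these principal parts cancel, while the quantization \eqref{quantum} fixes the local masses at $(4\pi, 8\pi)$. After the change of variables \eqref{change} this reflects the fact that $v_1$ is a regular bubble plus $\tfrac12 z$ and $v_2$ a singular bubble of order $\al_2 = 4$. I expect $\|\mathcal{E}_\lambda\|$, measured in $H^{-1}$ (equivalently a suitable weighted $L^p$ norm), to be of order $\lambda^{1/4}$ up to a harmless $\lambda^{-\e}$ loss, the slowest contribution coming from the coarse bubble $w_2$ at scale $\delta_2 \sim \lambda^{1/4}$, cf. \eqref{difference}.

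The core of the argument (Section 4) is the uniform invertibility of $\mathcal{L}_\lambda$ on $\mathcal{H}_k \times \mathcal{H}_k$, with an inverse whose norm grows at most like $\lambda^{-\e}$. I would argue by contradiction: assuming a sequence $\phi_\lambda$ with $\|\phi_\lambda\| = 1$ but $\mathcal{L}_\lambda[\phi_\lambda] \to 0$, I rescale around the origin at the two scales $\delta_1, \delta_2$ and also keep the unscaled macroscopic picture, extracting limit profiles that solve, respectively, the linearized regular Liouville equation, the linearized singular Liouville equation with $\al = 4$, and the linearized mean-field equation. Each limit must vanish: the mean-field limit by the nondegeneracy hypothesis \ref{H}; the translation/angular kernels of the Liouville limits because $\mathcal{H}_k$ contains no modes of the corresponding frequency once $k > 2$; and the residual dilation-type kernel of the singular Liouville operator by Proposition \ref{esposito}. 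The genuine obstacle here, and the main departure from \cite{mpwei}, is that the first component carries both a microscopic profile ($Pw_1$, at scale $\delta_1$) and the macroscopic function $z$; the blow-up analysis must decouple these two scales and show that the cross terms linking the local bubbles to the global mass do not produce any spurious kernel. Controlling this global--local coupling is where I expect the bulk of the technical work to lie.

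Finally (Section 5), granted a bound $\|\mathcal{L}_\lambda^{-1}\| = O(\lambda^{-\e})$, I would show that $\mathcal{T}_\lambda[\phi] := \mathcal{L}_\lambda^{-1}(-\mathcal{E}_\lambda - \mathcal{Q}_\lambda[\phi])$ maps the ball $\{\|\phi\| \le \lambda^{1/4-\e}\}$ into itself and is a contraction there. This rests on a Lipschitz estimate $\|\mathcal{Q}_\lambda[\phi] - \mathcal{Q}_\lambda[\psi]\| \le o(1)\,\|\phi - \psi\|$ on that ball, which follows from the exponential form of the nonlinearity together with the Moser--Trudinger inequality controlling $\|e^{\phi}\|_{L^p}$ for $\phi$ small in $H^1_0(\Omega)$. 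The Banach fixed-point theorem then yields a unique $\phi_\lambda \in \mathcal{H}_k \times \mathcal{H}_k$ satisfying the stated bound, which proves Theorem \ref{main}; unravelling \eqref{change} together with the choices \eqref{delta12} finally produces the asymptotics asserted in Theorem \ref{teo}.
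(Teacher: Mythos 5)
Your overall architecture coincides with the paper's: the ansatz $W_\la+\phi$, an error estimate of order $\la^{1/4-\e}$ (Lemma \ref{error} plus the continuity of the Green operator), an a priori bound for the linearized problem proved by contradiction and rescaling at the two scales $\de_1,\de_2$ together with the macroscopic scale, and a contraction mapping in a ball of radius $\la^{1/4-\e}$ using Moser--Trudinger estimates (this is precisely Sections 3--5 of the paper, with the inverse-norm bound $C|\log\la|$ where you claim $O(\la^{-\e})$; either would suffice for the contraction).

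However, there is a genuine gap at the heart of the linear theory, namely in how you dispose of the kernels of the limit problems. You assert that the ``residual dilation-type kernel of the singular Liouville operator'' is excluded ``by Proposition \ref{esposito}''. That proposition does the opposite: it says that, under the $k$-symmetry with $k>2$, every solution of the linearized (regular or singular) Liouville equation equals $\gamma\,\frac{1-|y|^{\al}}{1+|y|^{\al}}$; the symmetry kills only the angular/translation modes, while the radial dilation mode survives, since it is radial and therefore belongs to ${\cal H}_k$-type symmetry classes. Thus, after your blow-up analysis you only know that the rescaled limits are $\gamma_1\frac{1-|y|^2}{1+|y|^2}$ and $\gamma_2\frac{1-|y|^4}{1+|y|^4}$ with unknown $\gamma_1,\gamma_2$, and the contradiction does not close. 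Note also that $\de_1,\de_2$ in \eqref{delta12} are fixed functions of $\la$, not free parameters, so there is no Lyapunov--Schmidt adjustment available to absorb these modes: their vanishing must be proved directly. This is exactly what Steps 3 and 4 of the paper's Proposition \ref{inv} accomplish, and it is the technical core of the whole proof. One first tests the system against the projected dilation modes $PZ_1,PZ_2$ and multiplies by $\log\la_n$, obtaining the refined rates $\langle\tilde{\phi_1}_n,1\rangle_{\mathrm{L}_2}-\frac{\pi}{\into e^z}\into e^z{\phi_1}_n\,dx=o\big(1/|\log\la_n|\big)$ and $\langle\tilde{\phi_2}_n,1\rangle_{\mathrm{L}_4}=o\big(1/|\log\la_n|\big)$; one then tests against $Pw_1$ and $Pw_2$ (which are of size $|\log\la_n|$), and, via the identities \eqref{comput1}--\eqref{comput3}, the logarithmically divergent terms cancel exactly against those rates, leaving the relations $4\gamma_1-\gamma_2=o(1)$ and $\gamma_2=o(1)$, hence $\gamma_1=\gamma_2=0$. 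Without this mechanism (or an equivalent one), the a priori estimate for the linearized operator --- and with it your entire fixed-point scheme --- remains unproven; this global--local interplay you correctly anticipated as the hard point is resolved there, not by Proposition \ref{esposito}.
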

As we shall see at the end of the paper,  Theorem \ref{teo} follows quite directly from Theorem \ref{main}.

We end up this section by setting the notation and basic well-known
facts which will be of use in the rest of the paper. We  denote by  $\|\cdot\|$ and $\|\cdot\|_p$  the norms in  the space $H^1_0(\Omega)$ and $L^p(\Omega)$, respectively, namely
 \beq\label{nott}\|u\|:=\|u\|_{H^1_0(\Omega)}
 ,\qquad \|u\|_p:=\|u\|_{L^p(\Omega)}
 \quad \forall u\in H^1_0(\Omega).\eeq
Moreover, if $u=(u_1,u_2)$, we denote:
$$\|u \|=\|u_1\|+\|u_2\|,\quad \|u\|_p=\|u_1\|_p+\|u_2\|_p. $$

In next lemma we recall the well-known Moser-Trudinger inequality
(\cite{Moe, Tru}).

\begin{lemma}\label{tmt} There exists $C>0$ such that for any bounded domain $\Omega$ in $\rr^2$
 $$\into e^{\frac{4\pi u^2}{\|u\|^2}}dx\le C |\Omega|\quad \forall u\in{ H}^1_0(\Omega),$$ where  $|\Omega|$ stands for the measure of the domain $\Omega$.
 In particular,  for any $q\geq 1$
 $$\| e^{u}\|_{q}\le  C^{\frac1q} |\Omega|^{\frac1q} e^{{q\over 16\pi}\|u\|^2}\quad \forall  u\in{H}^1_0(\Omega).$$

\end{lemma}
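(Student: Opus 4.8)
The first inequality is the \emph{sharp Moser--Trudinger inequality} in disguise: writing $\|u\|^2=\into|\nabla u|^2\,dx$ and replacing $u$ by $u/\|u\|$, it is equivalent to the assertion that $\into e^{4\pi u^2}\,dx\le C|\Omega|$ for every $u\in H^1_0(\Omega)$ with $\into|\nabla u|^2\,dx\le 1$, the constant $4\pi$ being optimal. The plan is to prove this normalized form in three steps---symmetric rearrangement, a logarithmic change of variable, and Moser's one-dimensional estimate---and then to read off the $L^q$ bound from the first inequality by an elementary use of Young's inequality.

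First I would reduce to radial functions on a disk by Schwarz symmetrization. Let $v$ be the decreasing symmetric rearrangement of $|u|$, defined on the disk $\Omega^*=B_R$ with $\pi R^2=|\Omega|$. Equimeasurability gives $\into e^{4\pi u^2}\,dx=\int_{B_R}e^{4\pi v^2}\,dx$, while the P\'olya--Szeg\H o inequality yields $\int_{B_R}|\nabla v|^2\,dx\le\into|\nabla u|^2\,dx\le1$, so that $v$ is again admissible. Since in two dimensions the Dirichlet energy is invariant under the scaling $x\mapsto Rx$ (and the factor $R^2$ produced by $dx$ is exactly $|\Omega|/\pi$), it suffices to bound $\int_{B_1}e^{4\pi v^2}\,dx$ by a universal constant for radial decreasing $v\in H^1_0(B_1)$ with $\int_{B_1}|\nabla v|^2\,dx\le1$.

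Next I would linearize the radial problem. Setting $r=e^{-t/2}$ (so $t\in[0,\infty)$) and $w(t)=\sqrt{4\pi}\,v(e^{-t/2})$, a direct computation turns the constraint into $w(0)=0$ and $\int_0^\infty \dot w(t)^2\,dt\le1$, and turns the quantity to be estimated into $\pi\int_0^\infty e^{w(t)^2-t}\,dt$. Everything thus reduces to Moser's one-dimensional lemma: there is an absolute constant $c_0$ with $\int_0^\infty e^{w^2-t}\,dt\le c_0$ for all such $w$. Cauchy--Schwarz gives only the borderline pointwise bound $w(t)^2\le t$, hence $e^{w^2-t}\le1$, which is not integrable on $[0,\infty)$; the real content is that the \emph{global} constraint $\int_0^\infty\dot w^2\le 1$ prevents $w(t)^2$ from staying close to $t$ over a long range, which is what keeps the integral finite. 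This sharp estimate at the critical exponent $4\pi$ is the only genuinely delicate point---it is exactly where the optimal constant enters---and I would carry it out by Moser's argument, splitting $[0,\infty)$ according to the values attained by $w$ and balancing the local growth of $w^2$ against the cost it incurs in the energy.

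Finally, the $L^q$ statement follows from the first inequality with no further analysis. By the elementary inequality $2ab\le a^2+b^2$ with $a=\sqrt{4\pi}\,u/\|u\|$ and $b=q\|u\|/(2\sqrt{4\pi})$ one obtains the pointwise bound $qu\le \frac{4\pi u^2}{\|u\|^2}+\frac{q^2\|u\|^2}{16\pi}$, whence $\|e^u\|_q^q=\into e^{qu}\,dx\le e^{q^2\|u\|^2/(16\pi)}\into e^{4\pi u^2/\|u\|^2}\,dx\le C|\Omega|\,e^{q^2\|u\|^2/(16\pi)}$; taking $q$-th roots yields $\|e^u\|_q\le C^{1/q}|\Omega|^{1/q}e^{q\|u\|^2/(16\pi)}$, as claimed. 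I expect the symmetrization and the change of variables to be routine, with the one-dimensional sharp estimate being the main obstacle.
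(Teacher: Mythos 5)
Your proposal is correct, but there is nothing in the paper to compare it against: the authors do not prove Lemma \ref{tmt} at all, they simply recall it as the classical Moser--Trudinger inequality with references to Moser and Trudinger. What you have written is precisely Moser's original argument from the cited source --- Schwarz symmetrization plus P\'olya--Szeg\H{o} to reduce to radial decreasing functions on a disk, the substitution $r=e^{-t/2}$, $w(t)=\sqrt{4\pi}\,v(e^{-t/2})$ converting the problem to the one-dimensional estimate $\int_0^\infty e^{w^2-t}\,dt\le c_0$ under $w(0)=0$, $\int_0^\infty\dot w^2\,dt\le 1$ --- and your reductions (equimeasurability, energy invariance under two-dimensional scaling, the computation turning the radial Dirichlet energy into $\int_0^\infty\dot w^2\,dt$) are all accurate. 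You leave the one-dimensional lemma itself as a sketch, correctly identifying it as the sole delicate point where the sharp constant $4\pi$ enters; since that is exactly the content of the cited reference, this is an acceptable level of detail here. The part of the lemma the paper actually uses downstream, the ``in particular'' $L^q$ bound, you prove completely and correctly: the pointwise Young inequality $qu\le \frac{4\pi u^2}{\|u\|^2}+\frac{q^2\|u\|^2}{16\pi}$ (valid also where $u<0$, since $2ab\le a^2+b^2$ holds for all real $a,b$), integration, and taking $q$-th roots give precisely the stated estimate, and this is the standard one-line deduction.
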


For any
$\alpha\ge2$ we will make use of the Hilbert spaces
\begin{equation}\label{ljs}
\mathrm{L}_\alpha (\rr^2):=\left\{u \in {\rm
W}^{1,2}_{loc}(\rr^2)\ :\  \left\|{|y|^{\frac{\alpha-2}{2}} \over
1+|y|^\alpha}u\right\|_{{L}^2(\rr^2)}<+\infty\right\}\end{equation}
 and
\begin{equation}\label{hjs}\mathrm{H}_\alpha (\rr^2):=\left\{u\in {\rm W}^{1,2}_{loc}(\rr^2) \ :\ \|\nabla u\|_{{L}^2(\rr^2)}+\left\|{|y|^{\frac{\alpha-2}{2}} \over 1+|y|^\alpha}u\right\|_{{L}^2(\rr^2)}<+\infty\right\},\end{equation}
 endowed with the norms
$$\|u\|_{\mathrm{L}_\alpha }:= \left\|{|y|^{\frac{\alpha-2}{2}} \over 1+|y|^\alpha}u\right\|_{{L}^2(\rr^2)}\
\hbox{and }\ \|u\|_{\mathrm{H}_\alpha }:= \(\|\nabla
u\|^2_{{L}^2(\rr^2)}+\left\|{|y|^{\frac{\alpha-2}{2}} \over
1+|y|^\alpha}u\right\|^2_{{L}^2(\rr^2)}\)^{1/2}.$$
We  denote by $\langle u,v \rangle_{\mathrm{L}_\alpha }$
the natural scalar product in ${\mathrm{L}_\alpha }$.

\begin{prop}\label{compact}
The embedding $i_\al:\mathrm{H}_\alpha
(\rr^2)\hookrightarrow\mathrm{L}_\alpha (\rr^2)$ is compact.
\end{prop}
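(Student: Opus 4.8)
The plan is to use the standard characterization that a bounded linear operator between Hilbert spaces is compact if and only if it sends weakly convergent sequences to norm-convergent ones. Since $i_\al$ is linear and continuous, it suffices to show that every sequence with $u_n\rightharpoonup 0$ weakly in $\mathrm{H}_\al(\rr^2)$ admits a subsequence with $\|u_n\|_{\mathrm{L}_\al}\to 0$. Writing $\omega(y)=\frac{|y|^{(\al-2)/2}}{1+|y|^\al}$ for the weight, the point to keep in mind is that since $\al\ge 2$ the exponent $(\al-2)/2$ is nonnegative, so $\omega$ is bounded near the origin (there is no singularity there), while $\omega^2\sim|y|^{-\al-2}$ as $|y|\to\infty$, which is \emph{subcritical} in $\rr^2$. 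I would split $\rr^2$ into a fixed ball $B_R$ and its complement $B_R^c$ and treat the two regions separately.

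First I would handle the region $B_R$. From the $\mathrm{H}_\al$-bound, $\nabla u_n$ is bounded in $L^2(\rr^2)$; and since $\omega$ is bounded below by a positive constant on $\{1\le|y|\le 2\}$, the $\mathrm{L}_\al$-bound controls $\|u_n\|_{L^2(\{1\le|y|\le2\})}$. A Poincar\'e-type inequality on $B_R$ (equivalence of the full $H^1(B_R)$ norm with $\|\nabla u\|_{L^2(B_R)}$ plus the $L^2$ norm on a fixed subdomain of positive measure) then bounds $\|u_n\|_{L^2(B_R)}$, so $u_n$ is bounded in $H^1(B_R)$. By Rellich--Kondrachov and $u_n\rightharpoonup 0$, a subsequence satisfies $u_n\to 0$ strongly in $L^2(B_R)$; since $\omega$ is bounded on $B_R$ this gives $\int_{B_R}\omega^2 u_n^2\to 0$. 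A diagonal argument over $R\to\infty$ produces a single subsequence valid for every $R$.

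The core of the argument is the tail estimate on $B_R^c$. Passing to polar coordinates and using $\omega^2\le|y|^{-\al-2}$ for $|y|\ge 1$, I would, along each ray, decompose $u=u(R,\theta)+g$ with $g(R,\theta)=0$ and apply the one-dimensional Hardy inequality
$$\int_R^\infty r^{-\al-1}g^2\,dr\le \frac{4}{\al^2}\int_R^\infty r^{1-\al}(\partial_r g)^2\,dr,$$
valid since $\al+1>1$. Bounding $r^{1-\al}\le R^{-\al}\,r$ for $r\ge R$, integrating in $\theta$, and estimating the constant part by its trace on $\partial B_R$ yields
$$\int_{B_R^c}\omega^2 u_n^2\,dy \le C\,R^{-\al}\,\|\nabla u_n\|_{L^2(\rr^2)}^2 + C\,R^{-\al}\int_{\partial B_R} u_n^2\,d\sigma.$$
Given $\e>0$, I would first fix $R$ large so that the gradient term is below $\e$ uniformly in $n$, which is possible precisely because subcriticality ($\al\ge 2$) forces the constant $R^{-\al}\to 0$. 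With $R$ now fixed, the trace term tends to $0$ as $n\to\infty$: indeed $u_n\rightharpoonup 0$ in $H^1$ of a neighbourhood of $\partial B_R$, and the trace embedding into $L^2(\partial B_R)$ is compact.

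Combining the two regions gives $\limsup_n\|u_n\|_{\mathrm{L}_\al}^2\le\e$ for every $\e>0$, hence $\|u_n\|_{\mathrm{L}_\al}\to 0$, proving compactness. \textbf{The main obstacle is the tail estimate}: one must extract smallness from the decay of the weight at infinity while simultaneously controlling the boundary value $u(R,\theta)$ produced by the Hardy splitting. The key structural feature is that the two resulting pieces become small in genuinely different ways — the gradient piece uniformly in $n$ by choosing $R$ large (thanks to $\al\ge 2$), and the trace piece for \emph{fixed} $R$ by letting $n\to\infty$ through the compact trace embedding — and it is exactly this order of limits that makes the argument close.
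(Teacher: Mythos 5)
Your proof is correct, and it differs from the paper's in the most extreme possible sense: the paper offers no argument at all, its ``proof'' being a citation of Proposition 6.1 of \cite{gpistoia}, whereas yours is self-contained. Your two-region scheme is sound and the key computations check out: writing $\omega(y)=|y|^{(\alpha-2)/2}/(1+|y|^\alpha)$, one indeed has $\omega^2(y)\le |y|^{-\alpha-2}$ for $|y|\ge1$; the one-dimensional Hardy inequality with constant $4/\alpha^2$ holds for $g(R)=0$ with no decay hypothesis at infinity (in the integration by parts the boundary term at infinity has a favorable sign and can simply be dropped); the elementary bound $r^{1-\alpha}\le R^{-\alpha}r$ for $r\ge R$ is what converts the Hardy right-hand side into $R^{-\alpha}\|\nabla u_n\|_{L^2}^2$; and your trace term actually comes out as $R^{-\alpha-1}\int_{\partial B_R}u_n^2\,d\sigma$, even better than what you claimed. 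The order of limits you single out ($R$ large first, uniformly in $n$, for the gradient piece; then $n\to\infty$ at fixed $R$ for the interior and trace pieces) is exactly what closes the argument. Two points should be made explicit rather than left implicit: (i) to identify the local limits as zero --- both for Rellich on $B_R$ and for the compact trace on $\partial B_R$ --- observe that your Poincar\'e step shows the restriction map $\mathrm{H}_\alpha(\rr^2)\to H^1(B_{R'})$ is a bounded linear operator, hence weakly continuous, so $u_n\rightharpoonup 0$ in $H^1(B_{R'})$ and any strong sublimit must vanish; (ii) the ray-by-ray Hardy argument uses that $W^{1,2}_{loc}$ functions are absolutely continuous along almost every ray (Fubini) and that the radial derivative is dominated by $|\nabla u_n|$. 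What your route buys over the citation is transparency about where the hypotheses enter: $\alpha\ge2$ keeps the weight bounded near the origin, while $\alpha>0$ alone already forces $R^{-\alpha}\to0$, which is the subcriticality driving the tail estimate; what the citation buys is brevity, the statement being proved in \cite{gpistoia} in exactly the form the paper needs.
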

\begin{proof}
  See \cite[Proposition 6.1]{gpistoia}.
\end{proof}

As commented in the introduction, our proof uses the singular
perturbation methods. For that, the nondegeneracy of the functions
that we use to build our approximating solution is essential. Next
proposition is devoted to the nondegeneracy of the entire
solutions of the Liouville equation (regular and singular).

\begin{prop}
\label{esposito} Assume that $\phi:\R^2\to\R$ satisfies \eqref{even} with $k>2$ and solves the equation
\begin{equation}\label{l1}
-\Delta \phi =2\alpha^2{|y|^{\alpha-2}\over (1+|y|^\alpha)^2}\phi\;\;
\hbox{in}\ \rr^2,\quad \int_{\R^2}|\nabla
\phi(y)|^2dy<+\infty,
\end{equation}
with $\alpha=2$ or $\alpha=4$. Then there exists $\gamma \in\rr$
such that
$$\phi(y)=\gamma   {1-|y|^\alpha\over 1+|y|^\alpha}.$$
\end{prop}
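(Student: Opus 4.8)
The plan is to decompose $\phi$ into angular Fourier modes and reduce \eqref{l1} to a family of decoupled ODEs whose admissible solutions I can classify explicitly. Writing $y=(r\cos\theta,r\sin\theta)$ and
$$\phi(r,\theta)=a_0(r)+\sum_{n\ge1}\big(a_n(r)\cos(n\theta)+b_n(r)\sin(n\theta)\big),$$
the radiality of the potential $V(r):=2\alpha^2 r^{\alpha-2}/(1+r^\alpha)^2$ makes each coefficient $\psi_n\in\{a_n,b_n\}$ solve
$$\psi_n''+\tfrac1r\psi_n'-\tfrac{n^2}{r^2}\psi_n+V(r)\psi_n=0.$$
Since $\alpha\ge2$ the potential $V$ is bounded, so $\phi$ is smooth and, by orthogonality, the Dirichlet energy splits as a sum over modes; hence each mode carries finite energy, i.e. $\int_0^\infty\big(|\psi_n'|^2+n^2r^{-2}|\psi_n|^2\big)r\,dr<+\infty$.

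The key step is the change of variable $\xi=\tfrac\alpha2\log r$, which kills the first-order term and turns the mode-$n$ equation into the Pöschl--Teller eigenvalue problem
$$-\psi_{\xi\xi}-\frac{2}{\cosh^2\xi}\,\psi=-\frac{4n^2}{\alpha^2}\,\psi\quad\text{on }\R,$$
while converting the finiteness of the angular part of the energy into the requirement $\psi\in L^2(\R,d\xi)$ for $n\ge1$ (respectively $\psi_\xi\in L^2$ for $n=0$). The operator $-\partial_{\xi\xi}-2\,\mathrm{sech}^2\xi$ is the reflectionless potential with $\ell(\ell+1)=2$, so it has a \emph{unique} $L^2$ eigenvalue $E=-1$, with eigenfunction $\mathrm{sech}\,\xi$, together with the threshold solution $\tanh\xi$ at $E=0$. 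Translating back, $E=-4n^2/\alpha^2$ equals $-1$ exactly when $n=\alpha/2$, and the corresponding $\mathrm{sech}\,\xi=2r^{\alpha/2}/(1+r^\alpha)$ is then the only nonradial finite-energy mode; for $n=0$ the threshold solution $\tanh\xi=-(1-r^\alpha)/(1+r^\alpha)$ is the claimed radial element, whereas its linearly independent companion grows like $\log r$ and is excluded by finite energy. For every $n>\alpha/2$ the energy $E$ lies strictly below the ground state, so no $L^2$ solution exists.

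It remains to exploit the $k$-symmetry \eqref{even}. Invariance under the rotation $\Re_k$ by $2\pi/k$ forces $a_n\equiv b_n\equiv0$ whenever $k\nmid n$; with $k>2$ this annihilates the modes $n=1$ and $n=2$, leaving only $n=0$ and $n\ge k\ge3$. Since the unique nonradial finite-energy mode occurs at $n=\alpha/2\in\{1,2\}$ (namely $n=1$ for $\alpha=2$ and $n=2$ for $\alpha=4$), and no finite-energy solution exists for $n>\alpha/2$, all surviving nonradial coefficients must vanish. Therefore $\phi=a_0(r)$ is radial and equal to a multiple of $(1-|y|^\alpha)/(1+|y|^\alpha)$, which is the assertion.

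I expect the main obstacle to be the careful one-dimensional classification: one must exclude finite-energy solutions for \emph{every} $n>\alpha/2$ and treat the threshold mode $n=0$ by hand, where $\tanh\xi$ is admissible although it is not itself $L^2$. The transformation to the $\mathrm{sech}^2$ potential is precisely what makes this transparent, reducing the whole question to the classical bound-state count of a reflectionless Schrödinger operator. The one genuinely delicate point is checking that it is exactly the hypothesis $k>2$ that removes the unique nonradial kernel mode: for $\alpha=4$ this mode sits at $n=2$ and would survive under a mere reflection symmetry ($k=2$), which explains why $k>2$ cannot be relaxed if the statement is to cover both $\alpha=2$ and $\alpha=4$ simultaneously.
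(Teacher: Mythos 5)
Your proof is correct, but it takes a genuinely different route from the paper's. The paper proceeds by citation: it first invokes \cite[Theorem 6.1]{gpistoia} to upgrade the finite-energy hypothesis in \eqref{l1} to boundedness, then applies the classification of bounded kernel elements of the singular Liouville linearization from \cite{dem} to write $\phi=c_0\phi_0+c_1\phi_1+c_2\phi_2$ with $\phi_1,\phi_2$ proportional to $\frac{|y|^{\alpha/2}}{1+|y|^\alpha}\cos\frac{\alpha}{2}\theta$ and $\frac{|y|^{\alpha/2}}{1+|y|^\alpha}\sin\frac{\alpha}{2}\theta$, and finally kills $c_1,c_2$ exactly as you do, since $k$-fold symmetry with $k>2$ is incompatible with the angular dependence $\sin\big(\frac{\alpha}{2}\theta+\theta_0\big)$. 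You instead rederive the classification from scratch: Fourier decomposition in $\theta$, the substitution $\xi=\frac{\alpha}{2}\log r$ (which is exact here, since $r^2V(r)=\frac{\alpha^2}{2}\,\mathrm{sech}^2\big(\frac{\alpha}{2}\log r\big)$), and the classical spectral picture of the reflectionless operator $-\partial_{\xi\xi}-2\,\mathrm{sech}^2\xi$, whose spectrum is $\{-1\}\cup[0,\infty)$ with unique bound state $\mathrm{sech}\,\xi=2r^{\alpha/2}/(1+r^\alpha)$ and threshold solution $\tanh\xi$; your translation of the finite Dirichlet energy into $\psi\in L^2(d\xi)$ for $n\ge1$ and $\psi_\xi\in L^2(d\xi)$ for $n=0$ is also correct. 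One presentational remark: your explicit enumeration treats $n=0$, $n=\alpha/2$ and $n>\alpha/2$, and silently skips $1\le n<\alpha/2$ (i.e.\ $n=1$ when $\alpha=4$, where $E=-1/4$); this case is in fact covered by your statement that $E=-1$ is the \emph{only} $L^2$ eigenvalue, but it deserves a sentence. As for what each approach buys: the paper's proof is short and delegates the hard analysis to the literature (and \cite{dem} handles all $\alpha\ge2$, not just $\alpha\in\{2,4\}$); yours is self-contained, works directly from the energy hypothesis without the intermediate boundedness step, and makes the kernel count (one radial threshold mode plus two angular copies of the single bound state) completely transparent. Your closing observation that $k=2$ would not suffice for $\alpha=4$, because the nonradial mode sits at $n=\alpha/2=2$, is exactly the point of the paper's remark that Proposition \ref{esposito} is the reason for the symmetry assumption \eqref{ksym}.
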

\begin{proof}
In \cite[Theorem 6.1]{gpistoia} it was proved that any solution
$\phi$ of \eqref{l1} is actually a bounded solution. Therefore we
can apply the result in \cite{dem} to conclude that $\phi= c_0 \phi_0 + c_1
\phi_1 + c_2 \phi_2$ for some $c_0,c_1,c_2\in \R$, where
$$\phi_0(y):   ={1-|y|^\alpha\over 1+|y|^\alpha} ,\ \; \;\phi_1(y):={ |y|^{  \frac\alpha2}\over 1+|y|^\alpha} \cos\frac{\alpha }2\theta
,\ \;\;\phi_2(y):={ |y|^{\frac\alpha2}\over 1+|y|^\alpha} \sin
\frac\alpha2 \theta.
$$ In the above definitions we have used polar coordinates.
Note that $\phi_0$ is radially symmetric and hence it satisfies
\eqref{even}; thus, $c_1 \phi_1 + c_2 \phi_2$ must satisfy
\eqref{even}. Observe now that $$c_1 \phi_1(y) + c_2 \phi_2(y)=A  \frac{|y|^{\frac{\alpha}{2}}}{1+|y|^\alpha} \sin\Big(\frac{\alpha}{2}\theta + \theta_0\Big)$$ with  $A=\sqrt{c_1^2 + c_2^2}$, $\theta_0\in \R$. Since $k>2$, we get that $A=0$ and, consequently,  $c_1=c_2=0$, concluding the proof.
\end{proof}

\begin{rmk}
The validity of Proposition \ref{esposito} is the main reason for the symmetry requirement
\eqref{ksym}.\end{rmk}

In our estimates throughout the paper, we will frequently denote by $C>0$, $c>0$ fixed
constants, that may change from line to line, but are always
independent of the variable under consideration. We also use the
notations $O(1)$, $o(1)$, $O(\lambda)$, $o(\lambda)$ to describe
the asymptotic behaviors of quantities in a standard way.

\section{Estimate of the error term}
The goal of this section is to  provide an estimate of the error up to which the couple $({W_1}_\la, {W_2}_\la)$ solves system \eqref{s}.
First of all, we perform the following estimate.
\begin{lemma}\label{aux} Define
$${E_1}_\la:=2\rho\frac{e^{{W_1}_\la}}{\into e^{{W_1}_\la}}-e^{w_1}-2(\rho-4\pi)\frac{e^{z}}{\into e^z},\quad {E_2}_\la:=2\la e^{{W_2}_\la}-|x|^2e^{w_2}.$$ For any $p\geq 1$ the following holds
$$\|{E_1}_\la\|_{p}=O(\la^{\frac{2-p}{2p}}),\,\;\; \|{E_2}_\la\|_{p}=O(\la^{\frac{2-p}{4p}}).$$
\end{lemma}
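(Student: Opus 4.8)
The plan is to estimate each error term by exploiting the asymptotic expansion \eqref{pro-exp} of the projected bubbles, together with the explicit definitions \eqref{delta12} of the concentration parameters $\delta_1,\delta_2$, which have precisely been chosen so that the leading-order constants cancel. Recall that $W_\la$ was built from $Pw_1, Pw_2$ and $z$, so I first substitute \eqref{answ} into the exponentials. Writing ${W_1}_\la = Pw_1 - \tfrac12 Pw_2 + z$ and using \eqref{pro-exp}, the term $e^{{W_1}_\la}/\into e^{{W_1}_\la}$ should, after the scaling by $\delta_1$, reproduce $e^{w_1}$ up to lower-order corrections, while the residual $z$-dependence should reconstruct the global mass term $2(\rho-4\pi)e^z/\into e^z$. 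Thus ${E_1}_\la$ is genuinely a remainder, and the heart of the matter is to track the size of $O(\delta_i^{\al_i})$ errors and the defect in the normalizing integral $\into e^{{W_1}_\la}$.

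The key computational steps I would carry out are as follows. First I would compute the normalization $\into e^{{W_1}_\la}$: since $e^{Pw_1}$ concentrates at the origin like $\delta_1^{-\al_1}$ times a bubble whose mass is quantized by \eqref{quantum}, the dominant contribution is $\int_{\R^2}|x|^{\al_1-2}e^{w_1}=4\pi\al_1=8\pi$, dressed by the regular factor $e^{4\pi\al_1 H(0,0)-\tfrac12(\cdots)+z(0)}$ evaluated at the concentration point. Comparing this with the prefactor in $\delta_1$ from \eqref{delta12} verifies that the constant in front of $e^{w_1}$ in ${E_1}_\la$ is indeed $1$ to leading order, and similarly that the $z$-piece matches. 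Second, I would bound the $L^p$ norm of the concentrated remainders: a bubble $e^{w_i}$ scaled at width $\delta_i$ satisfies $\|e^{w_i}\|_p \sim \delta_i^{\al_i(1-p)/p}\cdot(\text{const})$ by a change of variables $x=\delta_i y$, and the $O(\delta_i^{\al_i})$ corrections in \eqref{pro-exp} lose one factor of the bubble mass. Inserting $\delta_1\sim\sqrt\la$ and $\delta_2\sim\sqrt[4]\la$ from \eqref{difference2}--\eqref{difference}, the exponents collapse to the claimed $\la^{(2-p)/(2p)}$ and $\la^{(2-p)/(4p)}$; note that $\al_1=2$ gives $\delta_1^{\al_1}=\delta_1^2\sim\la$ and $\al_2=4$ gives $\delta_2^{\al_2}=\delta_2^4\sim\la$, which is why both error exponents are governed by the same power of $\la$ up to the factor $\al_i$ in the $L^p$ scaling.

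For ${E_2}_\la$ the analysis is parallel but one order simpler. Here ${W_2}_\la = Pw_2 - \tfrac12 Pw_1 - \tfrac12 z$, and expanding $e^{{W_2}_\la}$ via \eqref{pro-exp} the factor $\la e^{{W_2}_\la}$ should reproduce $|x|^2 e^{w_2}$; the singular weight $|x|^{\al_2-2}=|x|^2$ arises precisely from the $-\tfrac12 Pw_1$ contribution, since near the origin $e^{-\tfrac12 Pw_1}\sim (\delta_1^2+|x|^2)$ behaves like $|x|^2$ at scales large compared to $\delta_1$. The role of the prefactor in $\delta_2$ is again to force the multiplicative constant to be $1$, and the $L^p$ estimate follows from the same rescaling argument applied to the $\al_2=4$ singular Liouville bubble, now without any normalizing integral to control.

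The step I expect to be the main obstacle is the interaction between the two concentration scales inside ${E_1}_\la$ and ${E_2}_\la$: because $\delta_1\sim\sqrt\la \ll \delta_2\sim\sqrt[4]\la$, one has to be careful in the region $\delta_1 \ll |x| \ll \delta_2$ where the $w_1$-bubble has already spread out but the $w_2$-bubble has not, and to verify that the cross terms $e^{-\tfrac12 Pw_2}$ in ${W_1}_\la$ (and $e^{-\tfrac12 Pw_1}$ in ${W_2}_\la$) really do generate the correct singular weight rather than an uncontrolled error. Handling these overlapping annular regions by splitting the integrals into $\{|x|\le\delta_1\}$, $\{\delta_1\le|x|\le\delta_2\}$ and $\{|x|\ge\delta_2\}$, and checking that each piece obeys the stated bound, is the technically delicate part; the rest reduces to routine change-of-variables estimates for Liouville bubbles and the expansion \eqref{pro-exp}.
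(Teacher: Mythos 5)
Your skeleton coincides with the paper's (expand via \eqref{pro-exp}, rescale at the concentration scales, match constants through \eqref{delta12}, bound the remainders in $L^p$ by a change of variables), but the step you single out as the first key computation contains a genuine error that would sink the proof. You assert that the dominant contribution to $\into e^{{W_1}_\la}$ is the quantized bubble mass $8\pi$ dressed by constants. In fact the ansatz is designed precisely so that the concentrated and the global contributions to this integral are of the \emph{same} order. Since the $4\pi\al_i H(x,0)$-terms coming from $Pw_1$ and $-\frac12 Pw_2$ cancel exactly, one has the identity $e^{{W_1}_\la}=\frac{\delta_2^4+|x|^4}{(\delta_1^2+|x|^2)^2}\,e^{z}(1+O(\la))$ on all of $\Omega$; its concentrated part integrates to $\pi\frac{\delta_2^4}{\delta_1^2}e^{z(0)}+O(\sqrt\la)=\frac{4\pi}{\rho-4\pi}\into e^z+O(\sqrt\la)$ (note $\delta_2^4/\delta_1^2=O(1)$ by \eqref{delta12}), while the diffuse part contributes $\into e^z$. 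This is the content of \eqref{due}--\eqref{tre}: $\into e^{{W_1}_\la}=\frac{\rho}{\rho-4\pi}\into e^z+O(\sqrt\la)$, and both summands are indispensable, because only with this value of the normalization does $2\rho\, e^{{W_1}_\la}/\into e^{{W_1}_\la}$ reproduce $e^{w_1}$ with coefficient exactly $1$ and $e^z$ with coefficient exactly $2(\rho-4\pi)/\into e^z$. If the normalization were just the bubble contribution, as your sketch states, the coefficient of $e^{w_1}$ would come out $\rho/(4\pi)\in(1,2)$, so ${E_1}_\la$ would retain a term $\big(\frac{\rho}{4\pi}-1\big)e^{w_1}$ whose $L^p$ norm is of order $\la^{(1-p)/p}$, far larger than the claimed $\la^{(2-p)/(2p)}$. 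The coexistence of local and global mass at the same order is exactly the novelty of this construction, so this is not a cosmetic slip.

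Two further points. Your bubble-norm formula $\|e^{w_i}\|_{p}\sim\delta_i^{\al_i(1-p)/p}$ is correct only for $\al=2$; the relevant scaling for the objects appearing in the lemma is $\||x|^{\al-2}e^{w_\de^\al}\|_{p}\sim\de^{2(1-p)/p}$ for every $\al\geq 2$, which, combined with the extra gain of one factor $\delta_i$ in the remainders and $\delta_1\sim\sqrt\la$, $\delta_2\sim\sqrt[4]\la$, yields the stated exponents; your exponent for $i=2$ would not. Finally, the three-region splitting $\{|x|\le\delta_1\}$, $\{\delta_1\le|x|\le\delta_2\}$, $\{|x|\ge\delta_2\}$ that you anticipate as the main obstacle is unnecessary: the identity above, and its analogue $e^{{W_2}_\la}=\frac{\delta_1^2+|x|^2}{(\delta_2^4+|x|^4)^2}\,e^{12\pi H(x,0)-\frac{z}{2}}(1+O(\la))$ for the second component, hold globally on $\Omega$, and the elementary bounds $\frac{|x|^4}{(\delta_1^2+|x|^2)^2}=1+O\big(\frac{\delta_1^2}{\delta_1^2+|x|^2}\big)$, respectively $\delta_1^2+\delta_2^2|y|^2=\delta_2^2|y|^2+O(\delta_1^2)$ after scaling, dispose of the intermediate region $\delta_1\ll|x|\ll\delta_2$ in one line, which is how the paper proceeds.
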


\begin{proof}
By \eqref{pro-exp} we compute

$$e^{{W_1}_\la}=e^{Pw_1-{1\over2}  Pw_2+z}=\frac{\delta_2^4+|x|^4}{(\delta_1^2+|x|^2)^2}e^{z+O(\delta_1^2)+O(\delta_2^4)}=\frac{\delta_2^4+|x|^4}{(\delta_1^2+|x|^2)^2}e^{z}(1+O(\la))$$ and, since $\frac{|x|^4}{(\delta_1^2+|x|^2)^2}=1+O(\frac{\delta_1^2}{\delta_1^2+|x|^2})$, we deduce
\beq\label{W1}e^{{W_1}_\la}=\frac{\delta_2^4}{(\delta_1^2+|x|^2)^2}e^{z}(1+O(\la))+e^{z}+O\Big(\frac{\la}{\la+|x|^2}\Big).\eeq
Then we scale the first term as $x=\delta_1 y$ and, owing to $e^{z(\delta_1 y)} =e^{z(0)}(1+O(\delta_1 y))$, we get
\beq\label{unoo}e^{{W_1}_\la(x)}=\frac{\delta_2^4}{\delta_1^4(1+|y|^2)^2}e^{z(0)}(1+O(\la)+O(\sqrt\la |y|))+e^{z(x)}+O\Big(\frac{\la}{\la+|x|^2}\Big).\eeq
Therefore, using that $\int_{\frac{\Omega}{\delta_1}}\frac{1}{(1+|y|^2)^2}=\int_{\R^2}\frac{dy}{(1+|y|^2)^2}+O(\sqrt\la)=\pi+O(\sqrt\la)$, we arrive at   \beq\label{due}\into e^{{W_1}_\la}dx=
\frac{\delta_2^4}{\delta_1^2}e^{z(0)}\pi +\into e^{z} dx +O(\sqrt\la).\eeq
In view of  the choice of $\delta_1, \delta_2$ in \eqref{delta12},  we have that \eqref{due} can be rewritten in the following two forms:
\beq\label{tre}\into e^{{W_1}_\la}dx=\frac{\delta_2^4}{\delta_1^2}\frac{\rho }{4}e^{z(0)}+O(\sqrt\la)=
\frac{\rho}{\rho-4\pi} \into e^z dx +O(\sqrt\la),\eeq and, combining   \eqref{tre} with \eqref{unoo},

$$\begin{aligned}2\rho\frac{e^{{W_1}_\la(x)}}{\into e^{{W_1}_\la}} &=\frac{8(1+O(\sqrt\la)+O(\sqrt\la |y|))}{\delta_1^2(1+|y|^2)^2}+2(\rho-4\pi)\frac{e^{z(x)}}{\into e^z}(1+O(\sqrt\la))+O\Big(\frac{\la}{\la+|x|^2}\Big)
\\ &= e^{w_1(x)}+O\Big(\frac{1}{\sqrt\la(1+|y|)^3}\Big)+2(\rho-4\pi)\frac{e^{z(x)}}{\into e^z}(1+O(\sqrt\la))+O\Big(\frac{\la}{\la+|x|^2}\Big).\end{aligned}$$
Therefore, it follows that
$$\Bigg\|2\rho\frac{e^{{W_1}_\la}}{\into e^{{W_1}_\la}} -e^{w_1}-2(\rho-4\pi)\frac{e^{z}}{\into e^z}\Bigg\|_{p}=O(\la^{\frac{2-p}{2p}}).$$
We turn our attention to the estimate of $e^{ {W_2}_\la}  $; by \eqref{pro-exp} we obtain

\beq\label{W2}e^{ {W_2}_\la}= e^{Pw_2-{1\over2} Pw_1-{1\over2} z } =
 \frac{\delta_1^2+|x|^2}{(\delta_2^4+|x|^4)^2} e^{12\pi H(x,0)-\frac{z}{2}}(1+O(\la)).\eeq
Now we scale $x=\delta_2 y$:
$$\begin{aligned}e^{ {W_2}_\la(x)}&=
 \frac{\delta_1^2+\delta_2^2|y|^2}{\delta_2^8(1+|y|^4)^2} e^{12\pi H(\delta_2 y,0)-\frac{z(\delta_2 y)}{2}}(1+O(\la))\\ &=
  \frac{|y|^2}{\delta_2^6(1+|y|^4)^2} e^{12\pi H(\delta_2 y,0)-\frac{z(\delta_2 y)}{2}}(1+O(\la))+O\Big(\frac{1}{\la (1+|y|^4)^2} \Big)
 \\ &=\frac{|y|^2}{\delta_2^6(1+|y|^4)^2} e^{12\pi H(0,0)-\frac{z(0)}{2}}\big(1+O(\la)+O(\sqrt[4]\la |y|)\big)+O\Big(\frac{1}{\la (1+|y|^4)^2} \Big)
.\end{aligned}$$
The choice of $\delta_2$ in \eqref{delta12} yields
$$\begin{aligned}2\la e^{ {W_2}_\la(x)}=|x|^2e^{w_2(x)}+O\Big(\frac{1}{\sqrt[4]\la(1+|y|^4)} \Big)\end{aligned}$$ and we conclude
$$\begin{aligned}\big\|2\la e^{ {W_2}_\la}-|x|^2e^{w_2}\|_{p}=
O\Big(\la^{\frac{2-p}{4p}}\Big).\end{aligned}$$
\end{proof}

Now we are going to  estimate the error term ${R}_\la$,
\begin{equation}\label{rla}
{R}_\la:=   \( {R_1} _\la,  {R_2} _\la \),\end{equation} where
$$\begin{aligned} &{R_1}_\la:=-\Delta {W_1}_\la-2\rho\frac{ e^{ {W_1}_\la}}{\into e^{{W_1}_\la}}  +\la  e^ {{W_2}_\la},\\ & {R_2} _\la:=-\Delta {W_2}_\la-2\la e^{ {W_2}_\la}  + \rho {e^{{W_1}_\la}\over\into e^{{W_1}_\la} }
 .
\end{aligned}$$

  \begin{lemma}\label{error} Let $ {R}_\la$ be as in \eqref{rla}.
Then for any $p\in [1,2] $  we have
$$\| {R}_\la\|_{p}= O\(\la^{{1\over4}{2-p\over p}}\).$$
\end{lemma}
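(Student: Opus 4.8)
The plan is to compute $R_{i\lambda}$ explicitly by inserting the defining equations of the building blocks and then invoking Lemma~\ref{aux} to absorb the bulk of the error. I first recall that by construction of the projections, $-\Delta Pw_i = -\Delta w_i = |x|^{\alpha_i-2}e^{w_i}$, so that $-\Delta w_1 = e^{w_1}$ (since $\alpha_1=2$) and $-\Delta w_2 = |x|^2 e^{w_2}$ (since $\alpha_2=4$). Similarly, from the mean field equation \eqref{mf}, $-\Delta z = 2(\rho-4\pi) e^z/\into e^z$. Applying the Laplacian to the ansatz \eqref{answ} and using linearity of $P$ gives
\begin{equation*}
-\Delta {W_1}_\la = e^{w_1} - \tfrac12 |x|^2 e^{w_2} + 2(\rho-4\pi)\frac{e^z}{\into e^z},
\end{equation*}
and an analogous expression for $-\Delta {W_2}_\la$ with the roles of $\alpha_1,\alpha_2$ and the coefficient $-\tfrac12 z$ accounted for.

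Substituting these into the definitions of $R_{1\lambda}$ and $R_{2\lambda}$, the plan is to reorganize the terms so that each nonlinearity $2\rho\, e^{{W_1}_\la}/\into e^{{W_1}_\la}$ and $2\lambda\, e^{{W_2}_\la}$ is paired against its leading-order counterpart from Lemma~\ref{aux}. Concretely, I would write
\begin{equation*}
{R_1}_\la = \Bigl(e^{w_1}+2(\rho-4\pi)\tfrac{e^z}{\into e^z} - 2\rho\tfrac{e^{{W_1}_\la}}{\into e^{{W_1}_\la}}\Bigr) - \tfrac12 |x|^2 e^{w_2} + \lambda e^{{W_2}_\la},
\end{equation*}
recognizing the first bracket as exactly $-{E_1}_\la$, and combining $-\tfrac12 |x|^2 e^{w_2} + \lambda e^{{W_2}_\la} = \tfrac12({E_2}_\la - ... )$ up to the terms controlled by Lemma~\ref{aux}. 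A careful bookkeeping shows that $R_{1\lambda}$ is a fixed linear combination of ${E_1}_\la$ and ${E_2}_\la$, and likewise $R_{2\lambda}$ combines ${E_2}_\la$ and ${E_1}_\la$. Once the error term is expressed purely in terms of ${E_1}_\la$ and ${E_2}_\la$, the bound follows by the triangle inequality together with the estimates of Lemma~\ref{aux}: for $p\in[1,2]$ one has $\|{E_1}_\la\|_p = O(\lambda^{(2-p)/(2p)})$ and $\|{E_2}_\la\|_p = O(\lambda^{(2-p)/(4p)})$, and since $(2-p)/(4p) \le (2-p)/(2p)$ for $p\le 2$, the slower-decaying exponent $\tfrac14\tfrac{2-p}{p}$ dominates, giving $\|R_\la\|_p = O(\lambda^{\frac14\frac{2-p}{p}})$.

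The main obstacle I anticipate is the exact cancellation bookkeeping: one must verify that, after substituting $-\Delta W_{i\lambda}$ and regrouping, the terms $e^{w_1}$, $|x|^2 e^{w_2}$, and the global contribution $2(\rho-4\pi)e^z/\into e^z$ reassemble precisely into $\pm {E_1}_\la$ and $\pm {E_2}_\la$ with no leftover pieces of a slower order. In particular the coefficient $-\tfrac12$ appearing in both components of \eqref{answ} and the presence of the macroscopic term $z$ (which is a novelty relative to purely local constructions) must conspire so that the $e^z/\into e^z$ contributions match the definition of ${E_1}_\la$ exactly; this is where the specific choice of the coupling coefficients in the Toda nonlinearity, namely the $2$ and $-1$ structure, plays its role. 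Once this algebraic identity $R_\la = (\,-{E_1}_\la + \text{comb.}, \text{comb.}\,)$ is established, the remainder of the proof is a routine application of the already-proven $L^p$ bounds, so no further analytic difficulty arises beyond selecting the dominant exponent.
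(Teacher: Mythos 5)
Your proposal is correct and follows essentially the same route as the paper: apply $-\Delta$ to the ansatz \eqref{answ} using $-\Delta Pw_1=e^{w_1}$, $-\Delta Pw_2=|x|^2e^{w_2}$ and the equation \eqref{mf} for $z$, regroup so that ${R_1}_\la=-{E_1}_\la+\tfrac12{E_2}_\la$ and ${R_2}_\la=-{E_2}_\la+\tfrac12{E_1}_\la$ (the cancellation you worry about is exact, with no leftover terms), and conclude from Lemma \ref{aux} since the exponent $\tfrac{2-p}{4p}$ dominates $\tfrac{2-p}{2p}$ for $p\in[1,2]$.
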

\begin{proof} By \eqref{answ}, recalling that $z$ solves \eqref{mf},  we have
$$ \begin{aligned}
   {R_1} _\la &=
 -\Delta \(Pw_1-{1\over2}  Pw_2+z\)-2\rho {e^{{W_1}_\la}\over\into e^{{W_1}_\la}}+\la   e^{{W_2}_\la} \\
 &=\(e^{w_1}
- 2\rho\frac{e^{{W_1}_\la}}{\into e^{{W_1}_\la}}+2(\rho-4\pi)\frac{e^{z}}{\into e^z}
 \) -{1\over2}\( |x|^{2}e^{w_2}-2\la e^{{W_2}_\la} \). \end{aligned}$$
Analogously
$$ \begin{aligned}
   {R_2} _\la  &=\( |x|^{2}e^{w_2}-2\la e^{{W_2}_\la} \)-\frac12\(e^{w_1}
- 2\rho\frac{e^{{W_1}_\la}}{\into e^{{W_1}_\la} }+2(\rho-4\pi)\frac{e^{z}}{\into e^z}
 \) \end{aligned}$$
 and the thesis follows by applying Lemma \ref{aux}.
\end{proof}

\section{Analysis of the linearized operator}
Let us consider the following linear problem: given
$ h_1, h_2\in {\cal H}_k$,  find functions $\phi_1,\phi_2$ satisfying
\begin{equation}\label{lla}\left\{\begin{aligned}&
-\Delta \phi_1   +\la e^{{W_2}_\la}\phi_2   -2\rho \[{e^{{W_1}_\la}\phi_1\over \into e^{{W_1}_\la}dx}-
{e^{{W_1}_\la} \into e^{{W_1}_\la} \phi_1dx\over\( \into e^{{W_1}_\la}dx \)^2}  \]=\Delta h_1
 ,\\
 &-\Delta \phi_2 - 2\la e^{{W_2}_\la}\phi_2  +\rho \[{e^{{W_1}_\la}\phi_1\over \into e^{{W_1}_\la}dx}-
{e^{{W_1}_\la} \into e^{{W_1}_\la} \phi_1dx\over\( \into e^{{W_1}_\la}dx \)^2}  \] =\Delta h_2,\\ &
\phi_1,\phi_2\in {\cal H}_k.
\end{aligned}\right.
\end{equation}
\begin{prop}\label{inv}
For every $p\in (1,2)$ there exist $\lambda_0>0$ and $C>0$ such that for any $\la \in(0, \la_0)$,  any $h_1,h_2\in {\cal H}_k$ and  any
$\phi_1,\phi_2\in {\cal H}_k $ solutions of \eqref{lla}, the following holds $$\|\phi_1\|  +\|\phi_2\|  \leq C  |\log\la |  \Big(  \|h_1\| + \|h_2\|  \Big).$$
\end{prop}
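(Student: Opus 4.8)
**The plan is to prove the a priori estimate by contradiction, following the standard blow-up / rescaling scheme for singularly perturbed problems.** Suppose the estimate fails: then there exist sequences $\lambda_n \to 0$, data $h_1^n, h_2^n \in \mathcal{H}_k$ and solutions $\phi_1^n, \phi_2^n \in \mathcal{H}_k$ of \eqref{lla} with
$$\|\phi_1^n\| + \|\phi_2^n\| = 1, \qquad |\log \lambda_n|\big(\|h_1^n\| + \|h_2^n\|\big) \to 0.$$
I would then analyze the concentration of the $\phi_i^n$ at the three natural scales present in the problem: the fast scale $\delta_1 \sim \sqrt{\lambda}$ around the origin (governing $W_1$ through $e^{w_1}$), the slower scale $\delta_2 \sim \sqrt[4]{\lambda}$ (governing $W_2$ through $|x|^2 e^{w_2}$), and the macroscopic scale $O(1)$ carrying the global mass $z$. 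The goal is to show that at \emph{each} of these scales the rescaled limit of $(\phi_1^n, \phi_2^n)$ must vanish, thereby contradicting the normalization $\|\phi_1^n\| + \|\phi_2^n\| = 1$.

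\textbf{The key steps, in order.} First I would rewrite the system \eqref{lla} in fixed-point form. Since $-\Delta: H^1_0(\Omega) \to H^{-1}(\Omega)$ is an isomorphism, each equation can be inverted; using that $e^{W_1}$ concentrates at scale $\delta_1$ with mass quantized by \eqref{quantum}, and $\lambda e^{W_2}$ concentrates (after multiplication by the weight $|x|^2$) at scale $\delta_2$, I would recast $\phi_i^n$ as the sum of the contributions of the potentials acting on $\phi_j^n$ plus the (small) data $h_i^n$. The central analytic tools are the Moser--Trudinger inequality of Lemma \ref{tmt} (to control $\|e^{W_i}\phi_j\|$ via $L^q$ estimates) and the sharp $L^p$ bounds on the potentials implicit in Lemma \ref{aux}. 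Second, I would perform the rescaling $\phi_1^n(\delta_1 y)$ and $\phi_2^n(\delta_2 y)$ and pass to the limit in the compactly embedded spaces $\mathrm{H}_\alpha(\R^2)$ of Proposition \ref{compact} (with $\alpha = 2$ for the first component, $\alpha = 4$ for the second). The limit functions solve the linearized Liouville equations \eqref{l1}, and because the $\phi_i^n$ satisfy the symmetry \eqref{even}, Proposition \ref{esposito} forces each limit to be a multiple of $\phi_0^\alpha(y) = (1-|y|^\alpha)/(1+|y|^\alpha)$. Third, I would eliminate even this one-dimensional residual kernel: testing the equations against the projected functions $P(\partial_\delta w_i)$ (the natural dilation eigenfunction $\phi_0$ pulled back to $\Omega$) and exploiting orthogonality relations, I would show the corresponding coefficient $\gamma$ is forced to zero — the Dirichlet boundary condition breaks the scaling invariance and kills the dilation mode. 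Finally, on the macroscopic scale the rescaled limit of $\phi_1^n$ solves the linearized mean field equation associated to \eqref{mf}; hypothesis \ref{H} (nondegeneracy of $z$ in $\mathcal{H}_k$) forces this limit to vanish as well.

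\textbf{Where the $|\log\lambda|$ loss and the main obstacle arise.} The factor $|\log \lambda|$ on the right-hand side is not an artifact but reflects the genuine coupling in this problem, and pinning it down is the hardest point. The difficulty, as the authors stress in the introduction, is the \emph{dual} (local and global) nature of the first component: $\phi_1^n$ must be controlled simultaneously on the fast scale $\delta_1$ and on the macroscopic scale, and the nonlocal term $\frac{e^{W_1}\int_\Omega e^{W_1}\phi_1}{(\int_\Omega e^{W_1})^2}$ mixes these. Because $\int_\Omega e^{W_1}$ stays bounded (thanks to the choice of scales, see \eqref{due}--\eqref{tre}), the average $\int_\Omega e^{W_1}\phi_1^n$ does not automatically vanish, and controlling it produces precisely a logarithmic loss when one estimates the outer (macroscopic) part of $\phi_1^n$ in terms of its concentrated part. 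I expect the crux of the argument to be a careful decomposition $\phi_1^n = \phi_1^{n,\mathrm{in}} + \phi_1^{n,\mathrm{out}}$ together with an elliptic estimate showing $\|\phi_1^{n,\mathrm{out}}\| \lesssim |\log\lambda_n|\,(\text{data} + \text{inner contributions})$; the logarithm enters through the $L^p$-to-$L^\infty$ Green's function estimate near the concentration point. Once all three limits are shown to vanish, the contradiction with $\|\phi_1^n\|+\|\phi_2^n\|=1$ completes the proof.
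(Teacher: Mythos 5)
Your overall skeleton coincides with the paper's: a contradiction argument with the normalization $\|\phi_1^n\|+\|\phi_2^n\|=1$ and $|\log\la_n|(\|h_1^n\|+\|h_2^n\|)\to 0$, rescaling at the two scales $\delta_1,\delta_2$, identification of the limits as multiples $\gamma_i\frac{1-|y|^{\alpha_i}}{1+|y|^{\alpha_i}}$ via Proposition \ref{esposito}, and use of hypothesis \ref{H} to kill the macroscopic limit. However, your mechanism for eliminating the residual kernel is the step that fails. Testing against the projected dilation modes $P(\partial_\delta w_i)$ (the paper's $PZ_i$) does \emph{not} force $\gamma_i=0$: because $Z_i$ solves the same linearized Liouville equation, the terms carrying $\gamma_i$ cancel identically in that pairing, and what survives is only information on the mass projections. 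This is exactly the paper's Step~3, whose outcome is the refined rate $\langle\tilde{\phi_i}_n,1\rangle = o\big(1/|\log\la_n|\big)$ (suitably corrected by $\into e^z{\phi_1}_n$ for the first component), not the vanishing of $\gamma_i$. To kill $\gamma_1,\gamma_2$ the paper must then test against $Pw_1$ and $Pw_2$ themselves (Step~4), which are of size $|\log\la_n|$; there the nonvanishing pairings \eqref{comput2}--\eqref{comput3}, e.g.\ $\big\langle \frac{1-|y|^\alpha}{1+|y|^\alpha},\log(1+|y|^\alpha)\big\rangle_{\mathrm{L}_\alpha}=-\pi/\alpha$, extract $\gamma_i$, while the dangerous terms of the form $\log\delta_i\,\langle\tilde{\phi_i}_n,1\rangle$ are precisely neutralized by the $o(1/|\log\la_n|)$ rates of Step~3. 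This two-tier interplay is both the heart of the proof and the true source of the $|\log\la|$ factor; your alternative explanation (an inner/outer decomposition with a Green's-function $L^p$-to-$L^\infty$ estimate) is not the paper's mechanism and is left too vague to assess.

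A second, smaller gap is the conclusion. Showing that all rescaled weak limits vanish does not by itself contradict $\|\phi_1^n\|+\|\phi_2^n\|=1$, since weak convergence to zero is compatible with unit norm (the Dirichlet energy can persist without concentrating at any of the three identified scales). The paper's Step~5 closes this by testing the system with ${\phi_1}_n$ and ${\phi_2}_n$ and exploiting the algebraic structure of the coupling: multiplying the first equation by $2{\phi_1}_n$ and the second by ${\phi_1}_n$ (and symmetrically with ${\phi_2}_n$) makes the potential terms cancel against the quantities already shown to be $o(1)$, yielding $2\into|\nabla{\phi_1}_n|^2+\into\nabla{\phi_1}_n\nabla{\phi_2}_n = o(1)$ and its counterpart; summing and using positivity of the quadratic form $a^2+b^2+ab$ gives $\into|\nabla{\phi_1}_n|^2+\into|\nabla{\phi_2}_n|^2=o(1)$, the desired contradiction. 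You need an argument of this type (strong convergence of the gradients), not merely the vanishing of the limits.
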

\begin{proof}
We argue by contradiction. Assume that there exist $p\in(1,2),$ sequences
$\la_n\to0,$ ${h_i}_n\in{\cal H}_k$ and ${\phi_i}_n\in {\cal H}_k$ for $i=1,2$,
which solve \eqref{lla} and
\begin{equation}\label{inv2}
\|{\phi_1}_n\| +\|{\phi_2}_n\| =1, \end{equation}
\begin{equation}\label{inv3}
   |\log\la_n | (\|{h_1}_n\|+  \|{h_2}_n\|) \to 0.\end{equation}
We define  $\widetilde {\Omega_i}_n :={\Omega\over {\de_i}_n }$ and
 $$  \tilde {\phi_i}_n(y):=\left\{\begin{aligned}&{\phi_i}_n\({\de_i}_n y\)&\hbox{ if }&y\in \widetilde {\Omega_i}_n \\ &0&\hbox{ if }&y\in \rr^2\setminus\widetilde {\Omega_i}_n \end{aligned} \right. . $$
\\
 In what follows at many steps of the arguments we will pass to a subsequence, without further notice.  Moreover, for notational convenience, we avoid double subscripts and %the subscript `$M_n$' and
 we will simply write ${W_1}$,  ${W_2}$ in the place of ${W_1}_{\la_n}$, ${W_2}_{\la_n}$.

We split the remaining argument into five steps.
\medskip

\noindent{\em Step 1. We will show that
\begin{equation*}\label{step1.0} \tilde{\phi_1}_n \;\; \hbox{ is bounded in }\mathrm{H}_{2} (\rr^2),\end{equation*}
\begin{equation*}\label{step1.0.1} \tilde{\phi_2}_n\;\;   \hbox{ is bounded in }\mathrm{H}_{4} (\rr^2)
\end{equation*} (see  \eqref{hjs}).
  }

It is immediate to check that
\beq\label{bounabla}\int_{\R^2}|\nabla \tilde{\phi_i}_n|^2dy=\into|\nabla  {\phi_i}_n|^2dx\le1,\quad i=1,2.\eeq
Next, we multiply the first equation in \eqref{lla} by ${\phi_2}_n$, the second equation  by $2{\phi_2}_n$; then  we integrate over $\Omega$ and  sum up to obtain

$$\begin{aligned}3\la_n \into   e^{{W_2}} {\phi_2}_n^2dx=& 2\into |\nabla{\phi_2}_n|^2dx +\into \nabla{\phi_1}_n\nabla {\phi_2}_ndx+ \into \nabla {h_1}_n\nabla {\phi_2}_ndx\\ &+2\into \nabla {h_2}_n\nabla {\phi_2}_ndx\end{aligned}$$
which implies, by \eqref{inv2}--\eqref{inv3},
\beq\label{W22}\la_n \int_{\Omega}  e^{{W_2}}{\phi_2}_n^2dx \leq
C.\eeq So, Lemma \ref{aux}  gives $\into |x|^2
e^{w_2}{\phi_2}_n^2\leq C$ or,
equivalently,
$$\int_{\R^2} {|y|^2\over \(1+|y|^4\)^2} \tilde{\phi_2}_n^2  dy \le C.$$
Combining this with \eqref{bounabla}, we deduce that    $\tilde {\phi_2}_n$ is bounded in the space $\mathrm{H}_{4} (\rr^2)$.

We now consider the first component. First, let $\psi_1\in C^\infty_c(\Omega\setminus\{0\})$, $\psi_1\geq 0$ and
$\psi_1$ not identically zero. Then we multiply the first equation
in \eqref{lla} by ${\psi_1},$   we integrate over $\Omega$ and we
get
\begin{align}\label{1.1.1}&\int\limits_{{\Omega}} \nabla{\phi_1}_n\nabla\psi_1dx  - 2\rho \[{\into e^{{W_1}}{\phi_1}_n\psi_1dx \over \into e^{{W_1}}dx}-
{\into e^{{W_1}}\psi_1dx \into
e^{{W_1}} {\phi_1}_ndx\over\( \into e^{{W_1}}dx
\)^2}  \]\nonumber \\ &   +\la_n \into
e^{W_2}{\phi_2}_n\psi_1dx=-\into\nabla
{h_1}_n\nabla \psi_1dx.\end{align} We observe that by
\eqref{W1} and \eqref{W2}
\beq\label{unifo}e^{{W_1}}\to e^z , \;\;\;e^{W_2}\to \frac{1}{|x|^2}e^{12\pi H(x,0)-\frac{z}{2}} \hbox{ uniformly  on compact sets of } \Omega\setminus\{0\}\eeq and therefore, recalling \eqref{tre}, \eqref{1.1.1} yields
\beq\label{W11}\into e^{W_1}
{\phi_1}_ndx=O(1).\eeq

Now we multiply the first equation in \eqref{lla} by $2{\phi_1}_n$, the second equation by ${\phi_1}_n$,
we integrate over $\Omega$ and sum up to obtain
\beq\label{tosse}3\rho \[{\into e^{{W_1}}{\phi_1}_n^2dx\over \into e^{{W_1}}dx}-{
\Big( \into e^{{W_1}}{ \phi_1}_ndx\Big)^2\over\( \into e^{{W_1}}dx \)^2}  \]=O(1)
.\eeq

By taking into account \eqref{W11}, we get $\into
e^{W_1} {\phi_1}_n^2dx=O(1)$ which implies, by
Lemma \ref{aux},
$\into e^{w_1}{\phi_1}_n^2=O(1)$ or, equivalently,
$$\int_{\R^2}\frac{{\tilde{\phi_1}_n}^2}{(1+|y|^2)^2}dy=O(1)$$ and the thesis follows.
\bigskip

\noindent{\em Step 2. We will show that, for some $ \gamma_1, \gamma_2 \in\rr $,
\begin{equation}\label{step1.1} \tilde{\phi_1}_n\to \gamma_1  \frac{1-|y|^{2}}{1+|y|^{2}}\;\ \hbox{  weakly in $\mathrm{H}_{2} (\rr^2)$ and strongly in $\mathrm{L}_{2} (\rr^2) $, } \end{equation}
\begin{equation}\label{step1.2}  \tilde{\phi_2}_n \to \gamma_2  \frac{1-|y|^{4}}{1+|y|^{4}}\;\ \hbox{  weakly in $\mathrm{H}_{4} (\rr^2)$ and strongly in $\mathrm{L}_{4} (\rr^2) $,} \end{equation}
and
\begin{equation}\label{step1.3}  {\phi_1}_n\to 0 \;\ \hbox{weakly in $H^1_0(\Omega)$ and strongly in $L^q(\Omega)$ for any $q\ge2.$}\end{equation}
  }

Step 1 and Proposition \ref{compact} give
 $$\tilde {\phi_1}_n\to f\;\hbox{ weakly in }\mathrm{H}_2 (\rr^2) \hbox{ and strongly in }\mathrm{L} _2(\rr^2) .$$ Moreover, since $\|{\phi_1}_n\| \leq 1$,  $${\phi_1}_n\to g  \;\ \hbox{weakly in $H^1_0(\Omega)$ and strongly in $L^q(\Omega)$ for any $q\ge2.$}$$
Observe that each $\tilde {\phi_1}_n$ satisfies \eqref{even} owing to the definition of ${\cal H}_k$, then $f$ also satisfies \eqref{even}.
 Let $\tilde\psi_1\in C^\infty_c(\R^2)$ and set
 ${\psi_1}_n=\tilde{\psi_1}(\frac{x}{{\delta_1}_n})\in C^\infty_c(\Omega)$, for large $n$.
 We multiply the first equation in \eqref{lla} by ${\psi_1}_n,$   we integrate over $\Omega$ and we get
\begin{align}\label{1.1.1.1}&\int_{\widetilde{\Omega_1}_n} \nabla{\tilde{\phi_1}_n}\nabla\tilde\psi_1dy - 2\rho \[{\int_{{\Omega}} e^{W_1}{\phi_1}_n{\psi_1}_ndx \over \into e^{W_1}dx}-
{\into e^{W_1}{\psi_1}_ndx \int_{{\Omega}}
e^{W_1} {\phi_1}_ndx\over\( \into e^{W_1}dx
\)^2}  \]\nonumber \\ &   +\la_n \into
e^{W_2}{\phi_2}_n{\psi_1}_ndx=-\int_{{\Omega}}\nabla
{h_1}_n\nabla {\psi_1}_ndx.\end{align}
According to Lemma \ref{aux} we have
\beq\label{crucial}\begin{aligned}\frac{2\rho}{\into e^{W_1}}
\into e^{W_1}{\phi_1}_n dx&=\into
e^{w_1}{\phi_1}_n dx+2\frac{\rho-4\pi}{\into e^{z}} \into
e^{z} {\phi_1}_n+o(1)
\\ &=8\int_{\R^2} \frac{\tilde{\phi_1}_n }{(1+|y|^2)^2}dy+2\frac{\rho-4\pi}{\into e^{z}} \into e^{z} {\phi_1}_ndx+o(1)\\ &= 8\int_{\R^2} \frac{f }{(1+|y|^2)^2}dy+2\frac{\rho-4\pi}{\into e^{z}} \into e^{z} g\,dx+o(1).
\end{aligned}\eeq
Similarly,
$$\begin{aligned}\frac{2\rho}{\into e^{W_1}} \into e^{W_1}{\phi_1}_n{\psi_1}_n dx&
= 8\int_{\R^2} \frac{f \tilde{\psi}_1}{(1+|y|^2)^2}dy
+o(1)
\end{aligned}$$
and
$$\begin{aligned}\frac{2\rho}{\into e^{W_1}} \into e^{W_1}{\psi_1}_n dx&
= 8\int_{\R^2} \frac{\tilde{\psi}_1}{(1+|y|^2)^2}dy
+o(1)
\end{aligned}$$
Observe that $|x|^2e^{w_2}\leq C $ in the support of ${\psi_1}_n$; then, again by Lemma \ref{aux} we get $\la_ne^{W_2} {\psi_1}_n\to 0$ in $L^q(\Omega)$ for all $q\in (1,2)$, and so can estimate:

$$\begin{aligned}\la_n \int_{\Omega} e^{W_2}{\phi_2}_n{{\psi}_1}_n\, dx & = o(1)
. \end{aligned}$$
Finally, by \eqref{inv3}, using that $\into |\nabla {\psi_1}_n|^2=\int_{\R^2}|\nabla\tilde \psi_1|^2$,
\beq\label{macro}\int_{{\Omega}}|\nabla
{h_1}_n\nabla {\psi_1}_n|dx=O(\|{h_1}_n\|)=o(1).\eeq
\medskip Therefore, we may pass to the limit in \eqref{1.1.1.1} to
obtain
\begin{align}\nonumber&\int\limits_{\R^2} \nabla f \nabla\tilde\psi_1dy  =
8\int_{\R^2} \frac{f \tilde{\psi}_1}{(1+|y|^2)^2}dy\\ &\nonumber-\frac{4}{\rho}\int_{\R^2} \frac{\tilde{\psi}_1}{(1+|y|^2)^2}dy\bigg(8\int_{\R^2} \frac{f}{(1+|y|^2)^2}dy+2\frac{\rho-4\pi}{\into e^{z}dx} \into e^{z} g\,dx\bigg).\end{align}
Thus, since a single point has capacity zero in $\R^2$, we deduce that the function
$$f-\frac{4}{\rho}\int_{\R^2} \frac{f}{(1+|y|^2)^2}dy-\frac{\rho-4\pi}{\rho\into e^{z}} \into e^{z} g\,dx\in\mathrm{H}_2 (\rr^2)$$
is a solution of the equation
$$-\Delta\phi_0=\frac{8}{(1+|y|^2)^2}\phi_0\quad \hbox{ in }\R^2.$$
Recall now that $f$ satisfies \eqref{even} and,
by Proposition \ref{esposito}, \beq\label{crucial3}f-\frac{4}{\rho}\int_{\R^2} \frac{f}{(1+|y|^2)^2}dy-\frac{\rho-4\pi}{\rho\into e^{z}} \into e^{z} g\,dx=\gamma_1\frac{1-|y|^2}{1+|y|^2}\eeq
for some $\gamma_1\in\R$.
Since $\langle 1, \frac{1-|y|^2}{1+|y|^2}\rangle_{\mathrm{L}_2} =\int_{\R^2} \frac{1-|y|^2}{(1+|y|^2)^3}=0$, then $$\frac{4}{\rho}\int_{\R^2} \frac{f}{(1+|y|^2)^2}dy+\frac{\rho-4\pi}{\rho\into e^{z}} \into e^{z} g\,dx$$ actually coincides with the projection of $f$ onto the space of constants, namely, $$\frac{1}{\pi}\int_{\R^2}\frac{f}{(1+|y|^2)^2}dy=  \frac{4}{\rho}\int_{\R^2} \frac{f}{(1+|y|^2)^2}dy+\frac{\rho-4\pi}{\rho\into e^{z}} \into e^{z} g\,dx,$$
by which
\beq\label{crucial2}\frac{1}{\pi}\int_{\R^2}\frac{f}{(1+|y|^2)^2}dy=\frac{1}{\into e^{z}} \into e^{z} g\,dx,\eeq
By inserting this into \eqref{crucial} we get
\beq\label{crucial1}\frac{1}{\into e^{W_1}} \into e^{W_1}{\phi_1}_n dx=\frac{1}{\into e^{z} }\into e^{z} g\,dx+o(1).\eeq

Next let us fix $\psi_1\in C_c^\infty(\Omega\setminus\{0\})$; so,  we multiply the first equation in \eqref{lla} by ${\psi_1},$   we integrate over $\Omega$ and we get
$$\begin{aligned}&\into \nabla{\phi_1}_n\nabla\psi_1dx  - 2\rho \[{\into  e^{W_1}{\phi_1}_n\psi_1dx \over \into  e^{W_1}}-
{\into e^{W_1}\psi_1dx \into e^{W_1} {\phi_1}_ndx\over\( \into e^{W_1} \)^2}  \]\\ &+\la_n\into e^{W_2}{\phi_2}_n\psi_1dx=-\into\nabla {h_1}_n\nabla \psi_1.\end{aligned}$$
By \eqref{unifo}, and recalling \eqref{tre} and \eqref{crucial1},
we pass to the limit  to obtain
$$\into \nabla g\nabla\psi_1dx  - 2(\rho-4\pi) \[{\into  e^{z}g\psi_1dx \over \into  e^{z}}-
{\into e^{z}\psi_1dx \into e^{z} g\,dx\over\( \into e^{z} \)^2}  \]=0.$$
Since a single point has capacity zero in $\R^2$, then the above identity turns out to hold for every $\psi_1\in C^\infty_c(\Omega)$; we deduce that  $g\in H^1_0(\Omega)$ solves the problem
$$ -\Delta g  - 2(\rho-4\pi)\[ {  e^{z}g  \over \into  e^{z}}-
{   e^{z}  \into e^{z} g\, dx\over\(
\into e^{z} \)^2}  \]=0\ \;\;\hbox{in}\ \Omega.$$
Therefore, by \ref{H} we get $g=0$. \eqref{crucial2} gives
$\int_{\R^2}\frac{f}{(1+|y|^2)^2}=0$ and, finally, by
\eqref{crucial3} we conclude $f=\gamma_1\frac{1-|y|^2}{1+|y|^2}$.
We have thus proved \eqref{step1.1} and \eqref{step1.2}.

Observe that, in particular,

\begin{equation} \label{commento1} \int_{\Omega} e^{w_1}
{\phi_1}_ndx= 8\langle \tilde{\phi_1}_n, 1 \rangle_{\mathrm{L}_2} =o(1),\quad \ \int_{\Omega} e^{z} {\phi_{1}}_ndx=o(1).
\end{equation}

 In order to prove \eqref{step1.3}, observe that Step 1 and Proposition \ref{compact} give
 $$\tilde {\phi_2}_n\to h\;\hbox{ weakly in }\mathrm{H}_4 (\rr^2) \hbox{ and strongly in }\mathrm{L}_4(\rr^2) .$$
Since $\tilde {\phi_2}_n$ satisfies \eqref{even}, then $h$ also satisfies \eqref{even}.
Let us fix $\tilde\psi_2\in C_c^\infty(\R^2\setminus\{0\})$ and
set ${\psi_2}_n=\tilde{\psi_2}(\frac{x}{{\delta_2}_n})$,
$x\in\Omega$. Then ${\psi_2}_n\in
C^\infty_c(\Omega\setminus\{0\})$; so,  we multiply the second
equation in \eqref{lla} by ${\psi_2}_n,$   we integrate over
$\Omega$ and we get
\begin{align}\label{1.1.1.1.1}&\int\limits_{\widetilde{\Omega_2}_n} \nabla{\tilde{\phi_2}_n}\nabla\tilde\psi_2dy
+\rho \[{\int_{{\Omega}} e^{W_1}{\phi_1}_n{\psi_2}_ndx \over \into e^{W_1}dx}-
{\into e^{W_1}{\psi_2}_ndx \int_{{\Omega}} e^{W_1} {\phi_1}_ndx\over\( \into e^{W_1}dx \)^2}  \]\nonumber \\ &  -2\la_n\into e^{W_2}{\phi_2}_n{\psi_2}_n dx  =-\int_{{\Omega}}\nabla  {h_2}_n\nabla {\psi_2}_ndx\end{align}
Since $e^{w_1}\leq C$ in the support of ${\psi_2}_n$, then Lemma \ref{aux} gives $\frac{1}{\into e^{W_1}}e^{W_1}{\psi_2}_n\to 0$ in $L^q(\Omega)$ for all $q\in (1,2)$, and so
$$\frac{1}{\into e^{W_1}}\into e^{W_1}{\psi_2}_ndx=o(1), \quad \frac{1}{\into e^{W_1}}\into e^{W_1}{\phi_1}_n{\psi_2}_ndx=o(1).$$
Again by Lemma \ref{aux} we compute
$$2\la_n\into e^{W_2}{\phi_2}_n{\psi_2}_n dx =\into |x|^2e^{w_2}{\phi_2}_n{\psi_2}_n dx +o(1)=32\int_{\R^2}\frac{|y|^4}{(1+|y|^4)^2} h\tilde{\psi_2}dy +o(1).$$ Similarly to \eqref{macro}, we have $\int_{{\Omega}}\nabla  {h_2}_n\nabla {\psi_2}_n=o(1)$.
Then, recalling \eqref{crucial1} we can pass to the limit in \eqref{1.1.1.1.1} to obtain
\begin{align} \int_{\R^2} \nabla h\nabla\tilde\psi_2dy
-32 \int_{\R^2}\frac{|y|^4}{(1+|y|^4)^2}h  \tilde\psi_2
dy=0.\end{align}
The above identity turns out to hold for every
$\tilde\psi_2\in C^\infty_c(\R^2)$; we deduce that  $h\in {\mathrm
H}_4(\R^2)$ solves the problem
$$ -\Delta h  - 32\frac{|y|^4}{(1+|y|^4)^2}h=0\ \hbox{in}\ \Omega.$$
Recalling that $h$ satisfies \eqref{even}, Proposition \ref{esposito} implies $h=0$. This proves \eqref{step1.3}.

We point out that

\begin{equation} \label{commento2}  \int_{\Omega} |x|^2e^{w_2}
{\phi_{2}}_n dx= 32\langle \tilde{\phi_2}_n, 1 \rangle_{\mathrm{L}_4} =o(1).
\end{equation}

\bigskip

\noindent{\em Step 3. We will show that
$$\langle {\tilde{\phi_1}_n}, 1\rangle_{{\mathrm{L}}_2}-\frac{\pi}{\into e^z}\into e^{z} {\phi_1}_ndx=o\Big(\frac{1}{|\log\la_n|}\Big),$$
 $$\langle \tilde{\phi_2}_n, 1\rangle_{{\mathrm{L}}_4}=o\Big(\frac{1}{|\log\la_n|}|\Big).$$
  }
It is important to notice that, by \eqref{commento1} and
\eqref{commento2}, both expressions converge to $0$. This step is
devoted to prove an estimate on the speed of convergence, which
will be crucial for step 4.

Let   $Z_1   (x)={\delta_1^2-|x |^2\over \delta_1^2+|x |^2} $ $Z_2
(x)={\delta_2^4-|x |^4\over \delta_2^4+|x |^4} $ be the radial
solution to the linear problems
 $$-\Delta Z_1=8{\delta_1^2\over\(\delta_1^2+|x |^2\)^2}Z_1\;\; \hbox{in}\ \rr^2,\qquad \quad -\Delta Z_2=32{\delta_2^4|x|^2\over\(\delta_2^4+|x |^4\)^2}Z_2\ \;\hbox{in}\ \rr^2.$$
 Let $PZ_1$, $PZ_2$ be their projection   onto $H^1_0(\Omega)$. By the maximum principle it is not difficult to check that
\begin{equation}\label{pz0}\begin{aligned}
&PZ_1   =  Z_1 +1+O\(\de_1^2\)={ 2 \de_1^2 \over \de_1^2+|x |^{2} }+O\(\la\),
\\ &
PZ_2   =  Z_2 +1+O\(\de_2^4\)={ 2 \de_2^4 \over \de_2^4+|x |^{4} }+O\(\la\).
\end{aligned}\end{equation}
We observe that \beq\label{pizeta}\|PZ_1\|_q^q
=O\big({{\delta}_1^2}\big),\qquad
\|PZ_2\|_q^q=O\big({{\delta}_2^2}\big) \quad \forall
q>1 .\eeq
Now, we multiply the first equation in \eqref{lla} by $   PZ_1  $,
we integrate over $\Omega$ and we get
\begin{align}\label{1.2}& \into \nabla{\phi_1}_n\nabla PZ_1dx  -2\rho \[{\into e^{W_1}{\phi_1}_nPZ_1dx \over \into e^{W_1}dx}-
{\into e^{W_1}PZ_1dx \into e^{W_1}
{\phi_1}_ndx\over\( \into e^{W_1}dx \)^2}
\]\nonumber \\ &  +\la_n \into e^{W_2}{\phi_2}_n
PZ_1dx  =-\into \nabla {h_1}_n\nabla
PZ_1dx.\end{align}
We are now concerned with the estimates of each term of the above
expression.

Since, for any $q\geq 1$, \beq\label{sig1}\begin{aligned} \||x|^2
e^{w_2} PZ_1\|_q&=\Big\||x|^2 e^{w_2} { 2 {\de_1}_n^2
\over {\de_1}_n^2+|x |^{2} }\Big\|_{q}+O(\la_n)\||x|^2
e^{w_2}\|_{q}\\ &\leq 2 {\de_1}_n^2\big\|e^{w_2}
\big\|_{q}+O(\la_n)\||x|^2 e^{w_2}\|_{q} =
O(\la_n^{\frac{1}{2q}})
\end{aligned}\eeq
by Lemma \ref{aux} we conclude:

\beq\label{sigma3}\begin{aligned}2\la_n \into e^{W_2}{\phi_2}_n
PZ_1dx&=\into |x|^2e^{w_2}  {\phi_2}_n
PZ_1dx+o\Big(\frac{1}{|\log\la_n|}\Big)=o\Big(\frac{1}{|\log\la_n|}\Big).\end{aligned}\eeq

Next, we compute
\beq\label{sigma1}\begin{aligned}\into
\nabla{\phi_1}_n\nabla PZ_1dx=8\into
{\phi_1}_n{{\delta_1}_n^2\over\({\delta_1}_n^2+|x |^2\)^2}Z_1dx=\into
e^{w_1}{\phi_1}_nZ_1dx.
\end{aligned}\eeq
According to Step 1 we have \beq\label{simi} \begin{aligned} \into
|e^{w_1} {\phi_1}_n|dx=8\langle |\tilde{\phi_1}_n|,
1\rangle_{{\mathrm{L}}_2}=O(1). \end{aligned}\eeq

By Lemma \ref{aux}, using \eqref{pizeta} and \eqref{simi},
\beq\label{sigma2}\begin{aligned}\frac{2\rho}{\into
e^{W_1}}\into e^{W_1}{\phi_1}_nPZ_1dx&= \into
e^{w_1}{\phi_1}_nPZ_1dx+2\frac{\rho-4\pi}{\into e^z}\into
e^{z}{\phi_1}_nPZ_1dx+o\Big(\frac{1}{|\log\la_n|}\Big)
\\ &=\into e^{w_1}{\phi_1}_n(Z_1+1)dx
+o\Big(\frac{1}{|\log\la_n|}\Big).
\\ &
=\into e^{w_1}{\phi_1}_nZ_1dx+8\langle \tilde{\phi_1}_n, 1\rangle_{\mathrm{L}_2}
+o\Big(\frac{1}{|\log\la_n|}\Big).
\end{aligned}
\eeq Similarly, by using again Lemma \ref{aux} and \eqref{pizeta}
\beq\label{sigma4}\begin{aligned}\frac{2\rho}{\into e^{W_1}}\into
e^{W_1}PZ_1dx&=\into e^{w_1} (Z_1+1)dx
+o\Big(\frac{1}{|\log\la_n|}\Big)
\\ &
=\into e^{w_1}\frac{2{\delta_1}_n^2}{{\delta_1}_n^2+|x|^2}dx
+o\Big(\frac{1}{|\log\la_n|}\Big)
=8\pi +o\Big(\frac{1}{|\log\la_n|}\Big)
\end{aligned}\eeq
since $$\into
e^{w_1}\frac{{\delta_1}_n^2}{{\delta_1}_n^2+|x|^2}dx=8\int_{{\widetilde{\Omega_1}_n}}\frac{dy}{(1+|y|^2)^3}=8\int_{\R^2}\frac{dy}{(1+|y|^2)^3}+O({\delta_1}_n^2)=4\pi+O({\delta_1}_n^2).$$
Moreover,  by Lemma \ref{aux},

\beq \label{sigma6}\begin{aligned}\frac{2\rho}{\into e^{W_1}}\into
e^{W_1}{\phi_1}_n dx&= \into e^{w_1} {\phi_1}_n
dx+2\frac{\rho-4\pi}{\into e^z}\into e^{z}
{\phi_1}_ndx+o\Big(\frac{1}{|\log\la_n|}\Big)\\ &=8 \langle
\tilde{\phi}_{1\, n},1\rangle_{\mathrm{L}_2}
+2\frac{\rho-4\pi}{\into e^z}\into e^{z}
{\phi_1}_ndx+o\Big(\frac{1}{|\log\la_n|}\Big).\end{aligned}\eeq
Finally, since $PZ_1=O(1)$, we have  $\into |\nabla PZ_1|^2=\into
e^{w_1} PZ_1=O(\into e^{w_1})=O(1)$, by which, owing to
\eqref{inv3}, \beq\label{sigma5}\begin{aligned}\into
|\nabla {h_1}_n\nabla
PZ_1|dx&\leq\|{h_1}_n\| \|PZ_1\| =\|{h_1}_n\|=o\Big(\frac{1}{|\log\la_n|}\Big).\end{aligned}\eeq

We now multiply \eqref{1.2} by $\log\la_n$ and pass to the limit,
inserting \eqref{sigma3}, \eqref{sigma1}, \eqref{sigma2},
\eqref{sigma4}, \eqref{sigma6} and \eqref{sigma5}, to obtain

\beq\label{firstsigma}\log\la_n\langle \tilde{\phi_1}_n,
1\rangle_{{\mathrm{L}}_2}-\log\la_n\frac{\pi}{\into e^z}\into
e^{z} {\phi_1}_ndx=o(1)\eeq and the first part of the thesis
follows.

\bigskip

For the second part, we multiply the second equation in
\eqref{lla} by $PZ_2$ and integrate, to obtain:

\begin{align}\label{1.2bis}& \into \nabla{\phi_2}_n\nabla PZ_2dx  +\rho \[{\into e^{W_1}{\phi_1}_nPZ_2dx \over \into e^{W_1}dx}-
{\into e^{W_1}PZ_2dx \into e^{W_1}
{\phi_1}_ndx\over\( \into e^{W_1}dx \)^2}
\]\nonumber \\ &  -2\la_n \into e^{W_2}{\phi_2}_n
PZ_2dx  =-\into \nabla {h_2}_n\nabla
PZ_2dx.\end{align}

We now estimate each of the terms above. Observe that:

\beq\label{sigma11}\begin{aligned}\into
\nabla{\phi_2}_n\nabla PZ_2dx=32\into
{\phi_2}_n{{\delta_2}_n^4|x|^2\over\({\delta_2}_n^4+|x |^4\)^2}Z_2dx=\into
|x|^2e^{w_2}{\phi_2}_nZ_2dx .
\end{aligned} \eeq

According to Step 1, \beq\label{simi2}\into ||x|^2e^{w_2}
{\phi_2}_n|=32\langle |\tilde{\phi_2}_n|,
1\rangle_{{\mathrm{L}}_4}=O(1). \eeq

By Lemma \ref{aux}, \eqref{pizeta}  and \eqref{simi2} we compute
\beq\label{sigma22}\begin{aligned}2\la_n \into e^{W_2}{\phi_2}_n PZ_2dx &=\into|x|^2e^{w_2}{\phi_2}_n PZ_2dx  +o\Big(\frac{1}{|\log\la_n|}\Big)\\ &=
\into|x|^2e^{w_2}{\phi_2}_n (Z_2+1)dx  +o\Big(\frac{1}{|\log\la_n|}\Big)\\ &=\into|x|^2e^{w_2}{\phi_2}_n Z_2dx+32 \langle \tilde{\phi_2}_n, 1\rangle_{{\mathrm{L}}_4}+o\Big(\frac{1}{|\log\la_n|}\Big).
\end{aligned}\eeq
Next, by using Lemma \ref{aux}, recalling \eqref{pizeta} and
\eqref{simi}, \beq\label{sigma33}\begin{aligned}
\frac{2\rho}{\into e^{W_1}}\into e^{W_1}{\phi_1}_nPZ_2dx&=\into
e^{w_1}{\phi_1}_nPZ_2dx+2\frac{\rho-4\pi}{\into e^z}\into
e^{z}PZ_2 dx + o\Big(\frac{1}{|\log\la_n|}\Big)\\ &=\into
e^{w_1}{\phi_1}_n \frac{2{\delta_2}_n^4}{{\delta_2}_n^4+|x|^4}
dx+o\Big(\frac{1}{|\log\la_n|}\Big).
\end{aligned}\eeq
Let us observe that, using Step 1, and denoting by $\chi_A$ the characteristic function of the set $A$,  $$\begin{aligned}\int_{|x|\geq {\delta_2}_n^{3/2}}e^{w_1}|{\phi_1}_n|dx&= 8\langle \chi_{\big\{|y|\geq \frac{{\delta_2}_n^{3/2}}{{\delta_1}_n}\big\}}, \tilde{\phi_1}_n\rangle_{{\mathrm{L}_2}}\leq C \|\chi_{\big\{|y|\geq \frac{{\delta_2}_n^{3/2}}{{\delta_1}_n}\big\}}\|_{{\mathrm{L}_2}}
\\ &= C \Big(\int_{|y|\geq \frac{{\delta_2}_n^{3/2}}{{\delta_1}_n}}\frac{1}{(1+|y|^2)^2}dy\Big)^{1/2}\leq C\frac{{\delta_1}_n^{1/2}}{{\delta_2}_n^{3/4}}\end{aligned}$$
by which \eqref{sigma33} becomes
$$\begin{aligned}\frac{2\rho}{\into e^{W_1}}\into e^{W_1}{\phi_1}_nPZ_2dx&= \int_{|x|\leq {\delta_2}_n^{3/2}}e^{w_1}{\phi_1}_n\frac{2{\delta_2}_n^4}{{\delta_2}_n^4+|x|^4} dx  +o\Big(\frac{1}{|\log\la_n|}\Big)
\end{aligned}$$
and, since $ \frac{2{\delta_2}_n^4}{{\delta_2}_n^4+|x|^4}=2+O({\delta_2}_n^{2})$ for $|x|\leq {\delta_2}_n^{3/2}$,
\beq\label{sigma333}\begin{aligned}\frac{2\rho}{\into e^{W_1}}\into e^{W_1}{\phi_1}_nPZ_2dx&=2 \int_{|x|\leq {\delta_2}_n^{3/2}}e^{w_1}{\phi_1}_n dx  +o\Big(\frac{1}{|\log\la_n|}\Big)\\ &=2 \int_{\R^2}e^{w_1}{\phi_1}_n dx+o\Big(\frac{1}{|\log\la_n|}\Big)\\ &=16 \langle \tilde{\phi_1}_n, 1\rangle_{{\mathrm{L}}_2}+o\Big(\frac{1}{|\log\la_n|}\Big) .
\end{aligned}\eeq
The identical computation hold by replacing ${\phi_1}_n$ by $1$ in
\eqref{sigma333} and so
\beq\label{sigma44}\begin{aligned}\frac{2\rho}{\into e^{W_1}}\into
e^{W_1}PZ_2dx&
=16\|1\|_{{\mathrm{L}}_2}^2+o\Big(\frac{1}{|\log\la_n|}\Big)=16\pi
+o\Big(\frac{1}{|\log\la_n|}\Big).\end{aligned}\eeq Finally,
similarly to \eqref{sigma4},
\beq\label{sigma44bis}\begin{aligned}\into \nabla
{h_1}_n\nabla
PZ_1dx&=o\Big(\frac{1}{|\log\la_n|}\Big)\end{aligned}\eeq

By multiplying \eqref{1.2bis} by $\log\la_n$, passing
to the limit, and inserting \eqref{sigma11}, \eqref{sigma22},
\eqref{sigma333}, \eqref{sigma44} and \eqref{sigma44bis}, and
recalling \eqref{sigma6}, we arrive at

\begin{equation}\label{secondsigma}-\log\la_n\langle \tilde{\phi_2}_n, 1\rangle_{\mathrm{L}_4}+\frac{\rho-4\pi}{4\rho}\log\la_n
\Big(\langle \tilde{\phi_1}_n, 1\rangle_{\mathrm{L}_2}
-\frac{\pi}{\into e^{z} }\into e^{z}{\phi_1}_n dx \Big)=o(1) .\eeq
Combining \eqref{firstsigma} with \eqref{secondsigma} we obtain the thesis.

\bigskip

Before going on, we recall the following identities which follow by straightforward computations: for every $\alpha\geq 2$

\beq\label{comput1}\Big\langle
\frac{1-|y|^{\alpha}}{1+|y|^{\alpha}}, 1\Big\rangle_{{\mathrm
L}_\alpha}=\int_{\R^2}\frac{|y|^{\alpha-2}}{(1+|y|^\alpha)^2}\frac{1-|y|^\alpha}{1+|y|^\alpha}dy=0,\eeq

\beq\label{comput2}\Big\langle
\frac{1-|y|^{\alpha}}{1+|y|^{\alpha}},
\log(1+|y|^\alpha)\Big\rangle_{{\mathrm
L}_\alpha}=\int_{\R^2}\frac{|y|^{\alpha-2}}{(1+|y|^\alpha)^2}\frac{1-|y|^\alpha}{1+|y|^\alpha}\log(1+|y|^\alpha)dy=-\frac{\pi}{\alpha},\eeq
\beq\label{comput3}\Big\langle
\frac{1-|y|^{\alpha}}{1+|y|^{\alpha}},
\log|y|\Big\rangle_{{\mathrm
L}_\alpha}=\int_{\R^2}\frac{|y|^{\alpha-2}}{(1+|y|^\alpha)^2}\frac{1-|y|^\alpha}{1+|y|^\alpha}\log
|y|dy=-\frac{\pi}{2\alpha^2}.\eeq

\bigskip

\noindent\textit{Step 4. We will show that $\gamma_1=\gamma_2=0$.}

\bigskip

We first multiply the first equation in \eqref{lla} by $Pw_1$ and
integrate;
we obtain:

\beq\label{1.3}\begin{aligned}& \into \nabla{\phi_1}_n\nabla Pw_1dx -2\rho \[{\into e^{W_1}{\phi_1}_nPw_1dx \over \into e^{W_1}dx}-
{\into e^{W_1}Pw_1dx  \into
e^{W_1} {\phi_1}_ndx\over\( \into e^{W_1}dx
\)^2}  \] \\ & + \la_n \into
e^{W_2}{\phi_2}_n Pw_1dx     =-\into
\nabla{h_1}_n\nabla Pw_1dx.\end{aligned}\eeq

Let us estimate each of the terms above. By \eqref{commento1},

\beq\label{ludo1}\into \nabla{\phi_1}_n\nabla
Pw_1dx=\into  e^{w_1} {\phi_1}_ndx =o(1).\eeq

By Lemma \ref{aux}, using that $|Pw_1|=O(|\log\la_n|)$, we get
\beq\label{ludos2}\begin{aligned}\frac{2\rho}{\into
e^{W_1}}\into e^{W_1}{\phi_1}_nPw_1dx&=\into
e^{w_1}{\phi_1}_nPw_1dx+2\frac{\rho-4\pi}{\into e^z}\into
e^{z}{\phi_1}_nPw_1dx+o(1)\\ &= 8\langle {\tilde{\phi_1}}_n,
Pw_1({\delta_1}_n y)\rangle_{{\mathrm L}_2}
+2\frac{\rho-4\pi}{\into e^z}\into e^{z}{\phi_1}_nPw_1dx+o(1).
\end{aligned}
\eeq
Observe that by \eqref{pro-exp}
\beq\label{conve1} Pw_1\to -2\log |x|^2+8\pi H(x,0)\;\;\hbox{ in }L^q(\Omega)\quad \forall q\geq1.\eeq Moreover
$$Pw_1({\delta_1}_n y)=-2\log (1+|y|^2)+8\pi H({\delta_1}_n y, 0)-4\log{\delta_1}_n+O(\la_n),$$ by which  \beq\label{conve2}Pw_1({\delta_1}_n y)+4\log{\delta_1}_n\to -2\log (1+|y|^2)+8\pi H(0, 0)\;\hbox{ in }{\mathrm L}_2(\R^2).\eeq   Using these convergences into \eqref{ludos2}, and recalling Step 3, we obtain
\beq\label{ludo2}\begin{aligned}\frac{2\rho}{\into e^{W_1}}\into e^{W_1}{\phi_1}_nPw_1dx&=-16\gamma_1\Big\langle \frac{1-|y|^2}{1+|y|^2},\log (1+|y|^2) \Big\rangle_{{\mathrm L}_2} \\ &\;\;\;\;+64\pi H(0,0)\gamma_1\Big\langle  \frac{1-|y|^2}{1+|y|^2}, 1\Big\rangle_{{\mathrm L}_2}-32\log{\delta_1}_n\langle {\tilde{\phi_1}}_n, 1\rangle_{{\mathrm L}_2} +o(1)\\ &=8\pi\gamma_1-32\log{\delta_1}_n\langle {\tilde{\phi_1}}_n, 1\rangle_{{\mathrm L}_2}  +o(1)\end{aligned}
\eeq
by \eqref{comput1} and \eqref{comput2}.

Proceeding similarly as in \eqref{ludos2} with $1$ in the place of ${\phi_1}_n$ we deduce
\beq\label{ludo444}\begin{aligned}\frac{2\rho}{\into e^{W_1}}\into e^{W_1}Pw_1dx&=
8\langle 1, Pw_1({\delta_1}_n y)\rangle_{{\mathrm L}_2} +2\frac{\rho-4\pi}{\into e^z}\into e^{z}Pw_1dx+o(1)
\end{aligned}
\eeq
and using \eqref{conve1}-\eqref{conve2}

\beq\label{ludo3}\begin{aligned}\frac{2\rho}{\into
e^{W_1}}\into
e^{W_1}Pw_1dx&=-32\log{{\delta_1}_n}\|1\|_{{\mathrm L}_2}^2
-16\langle 1, \log(1+|y|^2)\rangle_{{\mathrm L}_2} +64\pi
H(0,0)\|1\|_{{\mathrm L}_2}^2\\ &\;\;\;\; + O(1)\\  &=-32\pi
\log{{\delta_1}_n} +O(1)
 \end{aligned}
\eeq by $\|1\|_{{\mathrm L}_2}^2=\pi$. Combining \eqref{sigma6}
with \eqref{ludo3} and taking into account \eqref{commento1} we
have \beq\label{ludo5}\begin{aligned}\frac{2\rho}{(\into
e^{W_1})^2}\into e^{W_1}Pw_1dx\into e^{W_1}{\phi_1}_n
dx&=-\frac{128}{\rho}\pi\log{\delta_1}_n\langle
{\tilde{\phi_1}_n}, 1\rangle_{{\mathrm L}_2} \\
&\;\;\;\;-\frac{32}{\rho}\pi\log{\delta_1}_n\frac{\rho-4\pi}{\into
e^z}\into e^{z} {\phi_1}_ndx+o(1).
\end{aligned}\eeq

Next we use Lemma \ref{aux} and  that $|Pw_1|=O(|\log\la_n|)$ and
we get \beq\label{ludos5}\begin{aligned}2\la_n \into
e^{W_2}{\phi_2}_n Pw_1dx  &=\into |x|^2e^{w_2}{\phi_2}_n
Pw_1dx+o(1)
\\ &=32\langle  {\tilde{\phi_2}}_n, Pw_1({\delta_2}_n y)\rangle_{{\mathrm L}_4}+o(1).
\end{aligned}\eeq
Observe that by \eqref{pro-exp}
$$\begin{aligned}Pw_1({\delta_2}_n y)&=-2\log ({\delta_1}_n^2+{\delta_2}_n^2 |y|^2  )  +8\pi H({\delta_2}_n y, 0)+O(\la)\\ &=-4\log {\delta_2}_n-2\log \Big(\Big(\frac{{\delta_1}_n}{{\delta_2}_n}\Big)^2+ |y|^2\Big)+8\pi H({\delta_2}_n y, 0)+O(\la_n)\end{aligned}$$
by which $$Pw_1({\delta_2}_n y)+4\log {\delta_2}_n\to -4\log |y|+8\pi H(0, 0)\;\;\hbox{ in }{\mathrm L}_4(\R^2).$$
By inserting this  convergence in \eqref{ludos5} we deduce
\beq\label{ludo4}\begin{aligned}2\la_n \into e^{W_2}{\phi_2}_n Pw_1dx& =-128\log {\delta_2}_n\langle  \tilde{\phi_2}_n , 1\rangle_{{\mathrm L}_4}
-128\gamma_2\Big\langle \frac{1-|y|^4}{1+|y|^4}, \log|y|\Big\rangle_{{\mathrm L}_4}\\ &\;\;\;\;+256\pi H(0,0)\gamma_2\Big\langle\frac{1-|y|^4}{1+|y|^4} ,  1\Big\rangle_{{\mathrm L}_4}+o(1)\\ &=4\pi\gamma_2+o(1)\end{aligned}\eeq
where we have used \eqref{comput1}, \eqref{comput3} and Step 3.

Finally, since $|Pw_1|=O(|\log\la_n|)$, we have $\into |\nabla
Pw_1|^2=\into e^{w_1}Pw_1=O(|\log\la_n|\into
e^{w_1})=O(|\log\la_n|)$ and so, by \eqref{inv3},
\beq\label{ludo6}\into|\nabla
{h_1}_n \nabla Pw_1|\leq\|{h_1}_n\|
\|Pw_1\|= o(1). \eeq

Passing to the limit in \eqref{1.3} and using \eqref{ludo1},
\eqref{ludo2},  \eqref{ludo5}, \eqref{ludo4} and
\eqref{ludo6}

$$-8\pi\gamma_1+\frac{32\rho-128\pi}{\rho}\log{\delta_1}_n \langle {\tilde{\phi_1}_n}, 1\rangle_{{\mathrm L}_2}-\frac{32}{\rho}\pi \log{\delta_1}_n\frac{\rho-4\pi}{\into e^z}\into e^{z} {\phi_1}_n+2\pi\gamma_2=o(1)$$ and then, by Step 3,
\beq\label{step4one}4\gamma_1-\gamma_2=o(1).\eeq

\bigskip

Next, we   multiply the second equation in \eqref{lla} by $   Pw_2
$, we integrate over $\Omega$ and we get
\begin{align}\label{1.4}& \into \nabla{\phi_2}_n\nabla Pw_2dx +\rho \[{\into e^{W_1}{\phi_1}_nPw_2dx \over \into e^{W_1   }dx}-
{\into e^{W_1}Pw_2dx  \into
e^{W_1} {\phi_1}_ndx\over\( \into e^{W_1}dx
\)^2}  \]\nonumber \\ & -2 \la_n \into
e^{W_2}{\phi_2}_n Pw_2dx     =-\into
\nabla{h_1}_n\nabla Pw_1dx.\end{align}

Again, we estimate each of the terms above. By \eqref{commento2},

\beq\label{ludo11}\into \nabla{\phi_2}_n\nabla
Pw_2dx=\into |x|^2e^{w_2}{\phi_2}_ndx=o(1).\eeq

By Lemma \ref{aux}, using that $|Pw_2|=O(|\log\la_n|)$, we get
\beq\label{ludoss22}\begin{aligned}2\la_n\into
e^{W_2}{\phi_2}_nPw_2dx&=\into|x|^2
e^{w_2}{\phi_2}_nPw_2dx+o(1)= 32\langle {\tilde{\phi_2}}_n,
Pw_2({\delta_2}_n y)\rangle_{{\mathrm L}_4} +o(1) .
\end{aligned}
\eeq
Observe that by \eqref{pro-exp}
$$Pw_2({\delta_2}_n y)=-2\log (1+|y|^4)+16\pi H({\delta_2}_n y, 0)-8\log{\delta_2}_n+O(\la_n),$$ by which  \beq\label{conve22}Pw_2({\delta_2}_n y)+8\log{\delta_2}_n\to -2\log (1+|y|^4)+16\pi H(0, 0)\;\;\hbox{ in }{\mathrm L}_4(\R^2).\eeq   Using these convergences into \eqref{ludoss22}, and recalling Step 3, we obtain
\beq\label{ludo22}\begin{aligned}2\la_n\into e^{W_2}{\phi_2}_nPw_2dx&=-64\gamma_2\Big\langle \frac{1-|y|^4}{1+|y|^4},\log (1+|y|^4) \Big\rangle_{{\mathrm L}_4} +512\pi H(0,0)\gamma_2\Big\langle  \frac{1-|y|^4}{1+|y|^4}, 1\Big\rangle_{{\mathrm L}_4}\\ &\;\;\;\,-256\log{\delta_2}_n\langle {\tilde{\phi_2}}_n, 1\rangle_{{\mathrm L}_4} +o(1)\\ &=16\pi\gamma_2+o(1)
\end{aligned}
\eeq by \eqref{comput1} and \eqref{comput2}. Again by Lemma \ref{aux},
taking into account that $|Pw_2|=O(|\log\la_n|)$, we get
\beq\label{ludos22}\begin{aligned}\frac{2\rho}{\into
e^{W_1}}\into e^{W_1}{\phi_1}_nPw_2dx&=\into
e^{w_1}{\phi_1}_nPw_2dx+2\frac{\rho-4\pi}{\into e^z}\into
e^{z}{\phi_1}_nPw_2dx+o(1)\\ &= 8\langle {\tilde{\phi_1}}_n,
Pw_2({\delta_1}_n y)\rangle_{{\mathrm L}_2}
+2\frac{\rho-4\pi}{\into e^z}\into e^{z}{\phi_1}_nPw_2dx+o(1)
\end{aligned}
\eeq
Again by \eqref{pro-exp}
$$Pw_2\to -8\log |y|+16\pi H(x,0)\;\;\hbox{ in }L^q(\Omega) \;\;\forall q\geq 1.$$
Moreover $$\begin{aligned}Pw_2({\delta_1}_n y)&=-2\log ({\delta_2}_n^4+{\delta_1}_n^4 |y|^4  )  +16\pi H({\delta_1}_n y, 0)+O(\la_n)\\ &=-8\log {\delta_2}_n-2\log \Big(1+\Big(\frac{{\delta_1}_n}{{\delta_2}_n}\Big)^4|y|^4\Big)+16\pi H({\delta_1}_n y, 0)+O(\la_n)\end{aligned}$$
by which $$Pw_2({\delta_1}_n y)+8\log {\delta_2}_n\to 16\pi H(0, 0)\;\;\hbox{ in }{\mathrm L}_2(\R^2).$$
By inserting these convergences into \eqref{ludos22}  we obtain

\beq\label{ludo33}\begin{aligned}\frac{2\rho}{\into e^{W_1}}\into e^{W_1}{\phi_1}_nPw_2dx&=-64\log {\delta_2}_n\langle {\tilde{\phi_1}_n}, 1\rangle_{{\mathrm L}_2}+   128H(0,0)\pi \gamma_1\Big\langle \frac{1-|y|^2}{1+|y|^2}, 1\Big\rangle_{{\mathrm L}_2}\\ &\;\;\;\;+2\frac{\rho-4\pi}{\into e^z}\into e^{z}{\phi_1}_n(-8\log |y|+16\pi H(x,0))dx+o(1)\\ &=-64\log {\delta_2}_n\langle {\tilde{\phi_1}_n}, 1\rangle_{{\mathrm L}_2}+o(1).
\end{aligned}
\eeq
Similarly, by replacing ${\phi_1}_n$ by $1$ in \eqref{ludos22},
$$\begin{aligned}\frac{2\rho}{\into e^{W_1}}\into e^{W_1}Pw_2dx&=
8\langle 1, Pw_2({\delta_1}_n y)\rangle_{{\mathrm L}_2} +2\frac{\rho-4\pi}{\into e^z}\into e^{z}Pw_2dx+o(1) \\ &=-64\log{\delta_2}_n\|1\|_{{\mathrm L}_2}^2+128\pi H(0,0)\|1\|_{{\mathrm L}_2}^2\\ &\;\;\;\;+2\frac{\rho-4\pi}{\into e^z}\into e^{z}(-8\log|y|+16\pi H(x,0))dx+o(1) \\ &=-64\pi\log{\delta_2}_n+O(1).
\end{aligned}
$$
Combining this with \eqref{sigma6} and taking into account
\eqref{commento1} we arrive at
\beq\label{ludo55}\begin{aligned}\frac{2\rho}{(\into
e^{W_1})^2}\into e^{W_1}Pw_2dx\into e^{W_1}{\phi_1}_n
dx&=-\frac{256}{\rho}\pi\log{\delta_2}_n\langle
\tilde{\phi_1}_n,1\rangle_{{\mathrm L}_2}\\
&\;\;\;\;-\frac{64}{\rho}\pi\log{\delta_2}_n
\frac{\rho-4\pi}{\into e^{z}}\into e^{z} {\phi_1}_n dx
+o(1)\end{aligned}\eeq and, similarly to \eqref{ludo6},
\beq\label{ludo66}\into\nabla {h_2}_n \nabla Pw_2dx=o(1).\eeq

Passing to the limit in \eqref{1.4} and using \eqref{ludo11},
\eqref{ludo22}, \eqref{ludo33},  \eqref{ludo55} and \eqref{ludo66}
we arrive at

$$\frac{-32\rho+128\pi}{\rho}\log{\delta_2}_n\langle \tilde{\phi_1}_n,1\rangle_{{\mathrm L}_2}+\frac{32}{\rho}\pi\log{\delta_2}_n \frac{\rho-4\pi}{\into e^{z}}\into e^{z} {\phi_1}_n dx-16\pi\gamma_2=o(1)$$
by which, using Step 3, $\gamma_2=0$ and, consequently, by \eqref{step4one}, $\gamma_1=0$.

\bigskip

\noindent\textit{Step 5. Conclusion.}

\bigskip

We will show that a contradiction arises. According to Step 2 and Step 4 we have

$$\tilde{\phi_1}_n\to 0\ \hbox{  weakly in $\mathrm{H}_{2} (\rr^2)$ and strongly in $\mathrm{L}_{2} (\rr^2) $, } $$
$$\tilde{\phi_2}_n \to 0\ \hbox{  weakly in $\mathrm{H}_{4} (\rr^2)$ and strongly in $\mathrm{L}_{4} (\rr^2) $,} $$
and
$${\phi_1}_n\to 0 \ \hbox{weakly in $H^1_0(\Omega)$ and strongly in $L^q(\Omega)$ for any $q\ge2.$}$$

By Lemma \ref{aux}  we get
\beq\label{conc1}\begin{aligned}\frac{2\rho}{\into e^{W_1}}\into e^{W_1}{\phi_1}_n^2dx&=\into e^{w_1}{\phi_1}_n^2dx+2\frac{\rho-4\pi}{\into e^z}\into e^{z}{\phi_1}_n^2dx+o(1)\\ &=
8\| {\tilde{\phi_1}}_n\|^2_{{\mathrm L}_2} +2\frac{\rho-4\pi}{\into e^z}\into e^{z}{\phi_1}_n^2dx+o(1) =o(1)
\end{aligned}
\eeq
and
\beq\label{conc3}\begin{aligned}2\la_n\into e^{W_2}{\phi_2}_n^2dx&=\into|x|^2 e^{w_2}{\phi_2}_n^2dx+o(1)=
32\| {\tilde{\phi_2}}_n\|^2_{{\mathrm L}_4} +o(1) =o(1).
\end{aligned}
\eeq
Moreover, recalling \eqref{commento1}  and \eqref{sigma6}, \beq\label{conch}\frac{1}{\into e^{W_1}}\into e^{W_1}{\phi_1}_ndx=o(1).\eeq Next, we   multiply the first and the second equations in
\eqref{lla} by $   {\phi_1}_n$, we integrate over $\Omega$ and,
using \eqref{conc1}  and \eqref{conch}, we deduce
$$ \into |\nabla{\phi_1}_n|^2dx+\la_n\into e^{W_2} {\phi_2}_n{\phi_1}_n dx =o(1),$$
$$ \into \nabla{\phi_2}_n\nabla{\phi_1}_ndx-2\la_n\into e^{W_2} {\phi_2}_n{\phi_1}_n dx =o(1),$$
respectively. Combining the above identities we obtain
\beq\label{fine1} 2\into |\nabla{\phi_1}_n|^2dx+ \into \nabla{\phi_2}_n\nabla{\phi_1}_ndx=o(1).\eeq
Similarly, we   multiply the first and the second equations in \eqref{lla} by $   {\phi_2}_n$, we integrate over $\Omega$ and, using   \eqref{conc3},
$$ \into \nabla{\phi_1}_n\nabla{\phi_2}_ndx -2\rho \[{\into e^{W_1}{\phi_1}_n{\phi_2}_ndx \over \into e^{W_1}dx}-
{\into e^{W_1}{\phi_2}_ndx  \into e^{W_1} {\phi_1}_ndx\over\( \into e^{W_1}dx \)^2}  \] =o(1),$$
$$ \into |\nabla{\phi_2}_n|^2dx +\rho \[{\into e^{W_1}{\phi_1}_n{\phi_2}_ndx \over \into e^{W_1}dx}-
{\into e^{W_1}{\phi_2}_ndx  \into
e^{W_1} {\phi_1}_ndx\over\( \into e^{W_1}dx
\)^2}  \]=o(1),$$ by which\beq\label{fine2} 2\into
|\nabla{\phi_2}_n|^2dx+ \into
\nabla{\phi_2}_n\nabla{\phi_1}_ndx=o(1).\eeq Summing up
\eqref{fine1} and \eqref{fine2},
$$\into |\nabla{\phi_1}_n|^2dx+\into |\nabla{\phi_2}_n|^2dx \leq 2
\into |\nabla{\phi_1}_n|^2dx+ 2 \into
|\nabla{\phi_2}_n|^2dx + 2\into
\nabla{\phi_2}_n\nabla{\phi_1}_ndx=o(1).$$ A contradiction arises
with \eqref{inv2}.
 \end{proof}

\section{The contraction argument: proof of Theorem \ref{teo} and Theorem \ref{main}}

Once we have studied the solvability of the linearized problem,
we are in position to prove Theorem \ref{main}.

First let us rewrite problem \eqref{s} in a more convenient way.
For any $ p>1,$ let $$i^*_{p}:L^{p}(\Omega)\to H^1_0(\Omega)$$ be the
adjoint operator of the embedding
$i_{p}:H^1_0(\Omega)\hookrightarrow L^{p\over p-1 }(\Omega),$ i.e.
$u=i^*_{p}(v)$ if and only if $-\Delta u=v$ in $\Omega,$ $u=0$ on
$\partial\Omega.$ We point out that $i^*_{p}$ is a continuous
mapping, namely
\begin{equation}
\label{isp} \|i^*_{p}(v)\| \le c_{p} \|v\|_{p}, \ \hbox{for any} \ v\in L^{p}(\Omega),
\end{equation}
for some constant $c_{p}$ which depends on $\Omega$ and $p.$
Then, setting  ${
u}:=(u_1,u_2)$ and $   {i^*_{p}}({
u}):=\(i^*_{p}(u_1),i^*_{p}(u_2)\)$, problem \eqref{s} is equivalent to
\begin{equation}\label{ps}
 u=   {i^*_{p}}\({     F} ({    u})\) \end{equation}
where
$$
{     F} ({    u}):=\(2 \rho g(u_1)-\lambda f(u_2),  2\lambda
f(u_2) -\rho  g(u_1) \)$$ and
$$
  f (u_2):=e^{u_2}\;\;  \hbox{and}\ \;\; g(u_1):=  \frac{e^{u_1}}{\int\limits_{\Omega}e^{u_1}dx}.
$$
Next we  denote by $      L: H^1_0(\Omega) \times H^1_0(\Omega) \to H^1_0(\Omega) \times H^1_0(\Omega)$ the
linear operator defined by
$$
      L(     \phi):=     {i^*_{p}}\(      F'(      W_\la)      \phi \) - \phi, \quad \phi=(\phi_1,\phi_2),
$$
where
$$
      F'(W_\la)(\phi)=   \( \begin{aligned} 2\rho \[{e^{{W_1}_\la}\phi_1\over \into e^{{W_1}_\la}}-
{e^{{W_1}_\la} \into e^{{W_1}_\la} \phi_1dx\over\( \into e^{{W_1}_\la} \)^2}  \] - \la e^{{W_2}_\la}\phi_2
 \\
    -\rho \[{e^{{W_1}_\la}\phi_1\over \into e^{{W_1}_\la}}-
{e^{{W_1}_\la} \into e^{{W_1}_\la} \phi_1dx\over\(
\into e^{{W_1}_\la} \)^2}  \] + 2\la e^{{W_2}_\la}\phi_2
\end{aligned}\).$$
Notice that problem \eqref{lla} reduces to
\beq\label{fre} L[\phi]=h, \qquad \phi, h\in {\cal H}_k\times {\cal H}_k.\eeq
As a consequence of Proposition \ref{inv} we derive the invertibility of $L$.

\begin{prop}\label{ex} For any $p\in (1,2) $ there exist $\lambda_0>0$ and $C>0$ such that for any $\la \in(0, \la_0)$
and for any $h=(h_1,h_2)\in {\cal H}_k\times {\cal H}_k$  there is a unique solution $ \phi=(\phi_1,\phi_2)$ to the problem \eqref{fre}. In
particular, $L$ is invertible; moreover, $$\| L^{-1} \| \leq C
|\log \lambda |.$$

\end{prop}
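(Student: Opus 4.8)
The plan is to read \eqref{fre} as a compact perturbation of the identity and invoke the Fredholm alternative, with Proposition \ref{inv} supplying both the injectivity and the quantitative bound. First I would observe that, by the very definitions of $i^*_p$ and of $F'(W_\la)$, the equation $L[\phi]=h$ in \eqref{fre} is exactly the weak formulation of the linear system \eqref{lla} (with right-hand sides $\Delta h_1,\Delta h_2$). Hence Proposition \ref{inv} applies verbatim to any solution $\phi\in\mathcal H_k\times\mathcal H_k$ of \eqref{fre} and yields $\|\phi\|\le C|\log\la|\,\|h\|$. This already gives injectivity of $L$ on $\mathcal H_k\times\mathcal H_k$ (take $h=0$, whence $\phi=0$), and, once surjectivity is known, the operator-norm bound $\|L^{-1}\|\le C|\log\la|$ claimed in the statement.

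It remains to prove that $L$ is onto, for which I would write $L=K-\mathrm{Id}$ with $K:=i^*_p\circ F'(W_\la)$ and show that $K$ is compact on $\mathcal H_k\times\mathcal H_k$. The map $F'(W_\la)$ sends $H^1_0(\Omega)\times H^1_0(\Omega)$ boundedly into $L^p(\Omega)\times L^p(\Omega)$ for the fixed $p\in(1,2)$: each of its entries is a linear combination of terms of the form $e^{{W_1}_\la}\phi_1$, $(\into e^{{W_1}_\la}\phi_1)\,e^{{W_1}_\la}$ and $e^{{W_2}_\la}\phi_2$, whose $L^p$-norms are controlled by H\"older's inequality together with the integrability of $e^{{W_i}_\la}$ in high Lebesgue norms, precisely the type of estimate quantified in Lemma \ref{aux} and ultimately resting on the Moser--Trudinger inequality of Lemma \ref{tmt}. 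Since in dimension two the embedding $i_p:H^1_0(\Omega)\hookrightarrow L^{p/(p-1)}(\Omega)$ is compact, its adjoint $i^*_p:L^p(\Omega)\to H^1_0(\Omega)$ is compact as well; composing a compact operator with a bounded one, $K$ is compact. Moreover, because $\Omega$ is $k$-symmetric and $W_\la$ is symmetric (as $z\in\mathcal H_k$ by \ref{H} and the profiles $w_i$ are radial), both $F'(W_\la)$ and $i^*_p$ preserve property \eqref{even}; thus $K$ leaves the closed subspace $\mathcal H_k\times\mathcal H_k$ invariant and its restriction there remains compact.

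With $K$ compact on $\mathcal H_k\times\mathcal H_k$, the Fredholm alternative guarantees that $L=K-\mathrm{Id}$ is surjective if and only if it is injective; injectivity was already established above from Proposition \ref{inv} with $h=0$. Hence $L$ is a bijection of $\mathcal H_k\times\mathcal H_k$, so \eqref{fre} admits a unique solution for every $h$, and feeding this solution back into the estimate of Proposition \ref{inv} delivers $\|L^{-1}\|\le C|\log\la|$.

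I expect no genuine obstacle in this argument, since the entire difficulty of the invertibility has already been absorbed into the a priori estimate of Proposition \ref{inv}; the only points requiring a little care are verifying the boundedness of $F'(W_\la)$ into $L^p\times L^p$ and checking the symmetry invariance that permits the whole argument to be carried out within $\mathcal H_k\times\mathcal H_k$ rather than the full space $H^1_0(\Omega)\times H^1_0(\Omega)$.
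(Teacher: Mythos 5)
Your proposal is correct and follows essentially the same route as the paper: the paper also writes $L$ as a compact perturbation of the identity, uses Proposition \ref{inv} with $h=0$ for injectivity, invokes the Fredholm alternative for existence and uniqueness, and then reads off the bound $\|L^{-1}\|\leq C|\log\lambda|$ from Proposition \ref{inv}. The only difference is that the paper states the compactness of $\phi\mapsto i^*_p\left(F'(W_\lambda)\phi\right)$ without proof, whereas you supply the supporting details (boundedness of $F'(W_\lambda)$ into $L^p$, compactness of $i^*_p$, and invariance of $\mathcal{H}_k\times\mathcal{H}_k$), all of which are accurate.
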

\begin{proof}  Observe that the operator $\phi\mapsto i^*_{p}(F'(W_\la) \phi)$ is a compact operator in ${\cal H}_k \times {\cal H}_k$.
   Let us consider the case $h=0$, and take $\phi\in {\cal H}_k\times {\cal H}_k$ with $L[\phi]=0$. In other words,  $\phi$ solves
the system \eqref{lla} with $h_1=h_2=0$. Proposition \ref{inv} implies $\phi\equiv 0$. Then, Fredholm's alternative implies the existence and uniqueness result.

Once we have existence, the norm estimate follows directly from Proposition \ref{inv}.
\end{proof}

\bigskip

\noindent{\bf The nonlinear problem.} Recall that we are interested in
finding a solution $u$ of \eqref{ps} with $u=W_\la+ \phi$, for some
small $\phi \in {\cal H}_k \times {\cal H}_k$. In what follows we denote by
 $N: H^1_0(\Omega) \times H^1_0(\Omega) \to H^1_0(\Omega) \times H^1_0(\Omega) $ the nonlinear operator
$$
      N(\phi):={i^*_p}\(      F (      W_\la+     \phi)-      F ( W_\la)-      F' (      W_\la)     \phi \).
$$
Therefore, problem \eqref{ps} turns out to be equivalent to the
problem \beq\label{goal}   N(\phi) + L(     \phi)=\tilde{R}_\la, \quad \phi\in {\cal H}_k\times{\cal H}_k\eeq
with
$$\tilde{R}_\la=W_\la-i_{p}^*(F(W_\la))
.$$
Observe that $\tilde{R}_\la= i_{p}^*({R}_\la)$, where
${R}_\la$ is given in \eqref{rla}.

The following lemma will be of use in the following:

\begin{lemma}\label{aux2} For any $p\geq1$, $r_0>0$ and $\eta>0$ there exist $\lambda_0>0$
and $C>0$ such that, for any $\la \in(0, \la_0)$ and for any $u \in
H^1_0(\Omega) $ with $\|u\| \le r_0$,

\begin{enumerate}

\item[a)] $\|\la f'(W_2+u) \, v\|_{p}\leq
C\la^{\frac{1-p}{p}-\eta} \|v\| \quad \forall v \in
H^1_0(\Omega),$

\item[b)] $\|g'(W_1+u)\, v \|_{p} \leq C \la^{2\frac{1-p}{p}-\eta}  \|v\| \quad \forall v \in H^1_0(\Omega),$

\item[c)] $\|\la f''(W_2+u)\, v\, z\|_{p}\leq
C\la^{\frac{1-p}{p}-\eta} \|v\|\|z\|\quad \forall v, \
z\in H^1_0(\Omega),$

\item[d)] $\|g''(W_1+u)\, v\, z\|_{p}\leq C \la^{3\frac{1-p}{p}-\eta}
\|v\|\|z\|\quad \forall v, \ z\in H^1_0(\Omega).$
\end{enumerate} \end{lemma}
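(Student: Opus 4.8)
The plan is to prove all four estimates by the same mechanism: bound the relevant derivative of the nonlinearity pointwise, then control the product against $v$ (and $z$) using the Moser--Trudinger inequality from Lemma \ref{tmt}. The factors of $\la$ (respectively $\la^2$, $\la^3$) in the exponents come directly from the concentration: near the origin $e^{W_1}$ is of order $\delta_1^{-2}\sim \la^{-1}$ and $e^{W_2}$ is of order $\delta_2^{-6}$, but the coupling with the explicit $\la$ prefactor and with the normalizing integral $\into e^{W_1}$ changes the effective orders, so the bookkeeping is where the stated powers of $\la$ originate.

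First I would record the elementary derivative formulas. For $f(t)=e^t$ one has $f'=f''=e^t$, so $\la f'(W_2+u)v = \la e^{W_2} e^u v$ and $\la f''(W_2+u)v z = \la e^{W_2} e^u v z$. For $g(t)=e^t/\into e^t$ the first derivative is $g'(W_1+u)v = \frac{e^{W_1+u}v}{\into e^{W_1+u}} - \frac{e^{W_1+u}\into e^{W_1+u}v}{(\into e^{W_1+u})^2}$, and the second derivative is a similar but longer expression quadratic in the test functions; the key point is that each term carries the factor $e^{W_1+u}/\into e^{W_1+u}$, and since $\into e^{W_1+u}\ge c\into e^{W_1}$ is bounded below (by \eqref{tre}, $\into e^{W_1}\to \frac{\rho}{\rho-4\pi}\into e^z>0$) while $\|e^{W_1}\|_p$ blows up like $\la^{(1-p)/p}$, the normalization in $g$ effectively keeps one copy of $e^{W_1}$ but divided by an $O(1)$ quantity.

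The central analytic step for each item is to apply Hölder's inequality, separating the singular factor $e^{W_i}$ from the test functions, and then to absorb the exponential terms $e^u$, $e^v$ (arising after using $|v|\le e^{|v|}$-type bounds) via Lemma \ref{tmt}: since $\|u\|\le r_0$ is bounded, the Moser--Trudinger inequality gives $\|e^{u}\|_{q}\le C$ for every fixed $q$, and similarly any factor $\|e^{cv}\|_q\le C e^{c^2 q\|v\|^2/(16\pi)}$ which, for the purpose of an estimate linear in $\|v\|$, is handled by a standard trick (bounding $|v|\le \eta^{-1}(e^{\eta|v|}-1)$ or splitting into the regions $\{|v|\le M\}$ and its complement). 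This is exactly where the arbitrarily small loss $\la^{-\eta}$ in the exponent comes from: choosing Hölder exponents arbitrarily close to the extreme cases $(1,\infty)$ forces the constant in Moser--Trudinger to degenerate, costing a factor that can be absorbed into $\la^{-\eta}$ for $\la$ small. The dominant singular contribution is then $\|\la e^{W_2}\|_p \sim \la\cdot\la^{-3(p-1)/p}\cdot(\text{scaling})$ for the $f$-terms and $\|e^{W_1}\|_p\sim\la^{(1-p)/p}$ for the $g$-terms, matched against the orders of $\delta_1,\delta_2$ in \eqref{delta12}; carrying out the scaling $x=\delta_i y$ and using $\int_{\R^2}(1+|y|^{\al_i})^{-2p}|y|^{(\al_i-2)p}\,dy<\infty$ reproduces the claimed powers $\la^{(1-p)/p}$, $\la^{2(1-p)/p}$, and $\la^{3(1-p)/p}$.

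The main obstacle will be the quadratic estimates (c) and (d), and especially (d): the second derivative $g''$ contains several terms with products of two integrals $\into e^{W_1+u}v$ and $\into e^{W_1+u}z$ divided by $(\into e^{W_1+u})^2$ or $(\into e^{W_1+u})^3$, and one must check that each such term, after Hölder and Moser--Trudinger, genuinely produces the single worst power $\la^{3(1-p)/p}$ rather than a worse one — i.e. that the normalization denominators compensate the numerator singularities correctly. I expect this to require a careful pairing of each $e^{W_1+u}$ factor in the numerator with its counterpart in $\into e^{W_1+u}$, using that the latter is bounded below, so that only \emph{one} uncompensated copy of $e^{W_1}$ (of order $\la^{-1}$ in $L^\infty$ near $0$, hence $\la^{(1-p)/p}$ in $L^p$ after scaling, but weighted cubically through the product structure) survives. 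Once this matching is done, the estimate is routine.
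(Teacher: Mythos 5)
Your architecture is essentially the paper's: write out the derivatives of $f$ and $g$ explicitly, apply H\"older's inequality with exponents chosen close to the extremes (this is exactly where the $\la^{-\eta}$ loss enters), absorb $e^u$ through the Moser--Trudinger inequality of Lemma \ref{tmt} using $\|u\|\le r_0$, estimate the concentrating factors by $\|e^{W_1}\|_p=O(\la^{\frac{1-p}{p}})$ and $\|\la e^{W_2}\|_p=O(\la^{\frac{1-p}{p}})$ (which follow from Lemma \ref{aux} after the scaling computation you describe), and read off the powers $2\frac{1-p}{p}$ and $3\frac{1-p}{p}$ from the number of copies of $e^{W_1}$ appearing in the terms of $g'$ and $g''$.

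There is, however, one genuine gap: the lower bound on the normalizing denominator. Everything in parts b) and d) hinges on $\into e^{W_1+u}dx\ge c>0$ uniformly in $\la$ and in $\|u\|\le r_0$, and your justification --- ``$\into e^{W_1+u}\ge c\into e^{W_1}$, bounded below by \eqref{tre}'' --- does not prove it: \eqref{tre} controls $\into e^{W_1}$ only and says nothing about the effect of multiplying by $e^u$. This is precisely the delicate point. A fixed bound $\|u\|\le r_0$ does not prevent $e^u$ from being as small as $e^{-c r_0\sqrt{|\log\la|}}$ on the ball $B(0,\delta_1)$ (take for $u$ a capacity-type logarithmic well centered at the origin), and that ball carries a portion of the mass of $e^{W_1}$ which is bounded away from zero; hence the concentrated part of the integral can genuinely be destroyed by $e^u$, and no argument localized at the concentration region can give the bound. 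The paper handles this with a separate, explicitly flagged step: by Lemma \ref{aux} one reduces to bounding $\|(e^{w_1}+e^z)e^{u}\|_1$ from below, discards the concentrating term ($\into e^{w_1}e^{u}dx\ge0$), and uses the background part together with Jensen's inequality, $\into e^{z}e^{u}dx\ge c\into e^{u}dx\ge c|\Omega|e^{-\frac{1}{|\Omega|}\into|u|dx}\ge c'>0$. You need this (or an equivalent) argument; as written, your step would fail. A secondary remark: since no bound on $\|v\|,\|z\|$ is assumed, the estimates must be genuinely linear in $v$ and bilinear in $(v,z)$, so bounding $|v|$ by $e^{|v|}$-type quantities only works after normalizing to $v/\|v\|$; the cleaner route, used in the paper, is simply the embedding $\|v\|_{L^{ps}}\le C\|v\|$.
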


\begin{proof} The proof of this lemma is basically contained in \cite[Lemma 4.7]{noi}; however, we reproduce it here for the sake of completeness.
To start with, easy computations lead to the estimate:
\begin{equation} \label{hola} \| e^{{w_1}}\|_{p} =
O(\lambda^{\frac{1-p}{p}}), \ \ \| |x|^2 e^{{w_2}}\|_{p} =
O(\lambda^{\frac{1-p}{p}})\quad \forall p\geq 1. \end{equation}
By Lemma \ref{aux} we conclude that:
\begin{equation} \label{hola2} \| e^{W_1}\|_{p} =
O(\lambda^{\frac{1-p}{p}}), \ \ \| \lambda e^{W_2}\|_{p} =
O(\lambda^{\frac{1-p}{p}})\quad \forall p\geq 1. \end{equation}
We give the complete proof for inequalities c), d), the others
being easier. We point out that by  H\"older's inequality  with
${1\over q} +{1\over r}+{1\over s}+{1\over t} =1$,
$$\begin{aligned}
\left\| \la f''\(W_2+u\)vz\right\| _{p} &\le \left\| \la e^{W_2
} \right\| _{pq}\left\|   e^{ u} \right\| _{pr}
\left\|v \right\| _{ps}\left\| z \right\| _{pt}\nonumber\\
&\hbox{(we use the continuity of $H^1_0(\Omega)\hookrightarrow
L^p(\Omega)$)}\nonumber\\ &\le C \left\| \la e^{W_2 } \right\|
_{pq}\left\|   e^{ u} \right\| _{pr}\left\|v \right\|  \left\| z
\right\|\nonumber\\ &\hbox{(we use Lemma \ref{tmt})}\nonumber\\
&\le C \left\| \la e^{W_2 } \right\| _{pq} e^{ \frac
{pr}{16\pi}\|u \|^2} \left\|v  \right\|  \left\| z \right\|
\nonumber\\ &\le C \la^{1-pq\over pq}  e^{ \frac {pr}{16\pi}\|u
\|^2} \left\|v \right\|  \left\| z \right\|.
\end{aligned}$$
It suffices now to choose $q>1$ sufficiently small to obtain c). Moreover
\begin{align*}
g''(W_1+u)[v,z]=&{e^{W_1+u}\over\int\limits_{\Omega }e^{W_1+u}}vz-{e^{W_1+u}\over\(\int\limits_{\Omega }e^{W_1+u}\)^2}v\int\limits_{\Omega }e^{W_1+u}z-{e^{W_1+u}\over\(\int\limits_{\Omega }e^{W_1+u}\)^2}z\int\limits_{\Omega }e^{W_1+u}v\\
 &-{e^{W_1+u}\over\(\int\limits_{\Omega }e^{W_1+u}\)^2}\int\limits_{\Omega }e^{W_1+u}vz+2{e^{W_1+u}\over\(\int\limits_{\Omega }e^{W_1+u}\)^3}\int\limits_{\Omega }e^{W_1+u}v
 \int\limits_{\Omega  }e^{W_1+u}z.\end{align*}
We use H\"older's inequalities with $ {1\over \alpha }+{1\over
\beta }=1,$ $ {1\over a}+{1\over b }+{1\over d}=1,$ and ${1\over q
}+ {1\over r }+{1\over s }+{1\over t }=1$ and  $\alpha$, $a$, $q$
sufficiently close to 1: we obtain
$$\begin{aligned}
\left\|g''(W_1+u)[v,z]\right\|_{p}  \le & {\|e^{W_1}\|_{pq}\|e^{u }\|_{pr}\|v \|_{ps}\|z \|_{pt}\over\|e^{W_1+u }\|_{1}} + 2 {\|e^{W_1}\|^2_{pa}\|e^{ u }\|^2_{pb}\|v \|_{pd }\|z \|_{pd} \over\|e^{W_1+u }\|^2_{1}}\nonumber\\
&+{\|e^{W_1}\|_{p \alpha}\|e^{u }\|_{p \beta}
\|e^{W_1}\|_{pq}\|e^{u }\|_{pr}\|v \|_{ps}\|z \|_{pt}\over\|e^{W_1+u }\|^2_{1}}\nonumber\\
 &+2 {\|e^{W_1 }\|_{p \alpha}\|e^{u }\|_{p \beta}
 \|e^{W_1}\|^2_{pa}\|e^{ u }\|^2_{pb}\|v \|_{pd }\|z \|_{pd} \over\|e^{W_1+u }\|^3_{1}}\nonumber\\
 &\hbox{(we use the continuity of $H^1_0(\Omega)\hookrightarrow L^p(\Omega)$ and Lemma \ref{tmt})} \nonumber \\
 & \le C\la^{3\frac{1-p}{p}-\eta} e^{ c\|u\|^2} \|v\|\|z\|  .\end{aligned}$$

It is important to point out that
\begin{equation}
\|e^{W_1+u}  \|_{1}\ge c.
\end{equation}
Indeed, by Lemma \ref{aux}, it suffices to show that:

\begin{equation}
\| (e^{w_1} + e^z)e^{u}  \|_{1}\ge c.
\end{equation}
Clearly, $\int_{\Omega} e^{w_1}e^{u} \ge 0$. Moreover, 

\begin{align*}  \int_{\Omega} e^z e^{u}dx \geq c \int_{\Omega} e^{u} dx\geq c \int_{\Omega} e^{-|u|} dx\geq  c  |\Omega| e^{- \frac{1}{|\Omega|} \int_{\Omega} |u|} \geq
c  |\Omega| e^{-C}.
\end{align*}
In the above estimates the Jensen's inequality has been used.

\end{proof}

\begin{lemma}\label{B2} For any $\eta>0$, $r_0>0$ there exist $\lambda_0>0$ and $C>0$
such that for any $\la \in(0, \la_0)$ and for any $\phi,\ \psi
\in H^1_0(\Omega)\times H^1_0(\Omega)$ with $\|\phi\|, \|\psi\| \le r_0$
 \begin{equation}\label{B21}
\left \| {N} (\phi)\right\| \le C  \lambda^{-2\eta}\|\phi \|^2\end{equation}
  and
    \begin{equation}\label{B22}
\left\| {N} (\phi )-N(\psi)\right\| \le C \lambda^{ -2\eta}
   \|\phi -\psi\|(\|\phi \|+\|\psi \|).\end{equation}
  \end{lemma}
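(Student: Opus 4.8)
The plan is to read $N(\phi)$ as the second-order Taylor remainder of $F$ at $W_\la$ and to control it through the pointwise estimates on $f''$ and $g''$ provided by Lemma \ref{aux2}. The key preliminary remark is that, by \eqref{isp}, $\|i^*_q(v)\|\le c_q\|v\|_q$ for every $q>1$; since $i^*_q$ is nothing but $(-\Delta)^{-1}$ and hence does \emph{not} depend on the exponent used, the element $N(\phi)\in H^1_0(\Omega)\times H^1_0(\Omega)$ is independent of the auxiliary $q$, and we are free to estimate $\|N(\phi)\|$ through whatever $q\in(1,2)$ is convenient. Thus it suffices to bound in $L^q(\Omega)$ the two scalar remainders
$$ g(W_1+\phi_1)-g(W_1)-g'(W_1)\phi_1,\qquad \la\big(f(W_2+\phi_2)-f(W_2)-f'(W_2)\phi_2\big),$$
which are the building blocks of $F(W_\la+\phi)-F(W_\la)-F'(W_\la)\phi$.

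For \eqref{B21} I would write each remainder in integral form, e.g.
$$ g(W_1+\phi_1)-g(W_1)-g'(W_1)\phi_1=\int_0^1(1-t)\,g''(W_1+t\phi_1)[\phi_1,\phi_1]\,dt,$$
and similarly for the $f$-term, and then apply Lemma \ref{aux2}(c)--(d) with $u=t\phi_i$ (whose norm is $\le r_0$), $v=z=\phi_i$, obtaining uniformly in $t\in[0,1]$
$$ \|g''(W_1+t\phi_1)[\phi_1,\phi_1]\|_q\le C\la^{3\frac{1-q}{q}-\eta'}\|\phi_1\|^2,\qquad \|\la f''(W_2+t\phi_2)[\phi_2,\phi_2]\|_q\le C\la^{\frac{1-q}{q}-\eta'}\|\phi_2\|^2,$$
for an auxiliary $\eta'>0$ at our disposal. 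Integrating in $t$ and using \eqref{isp} gives $\|N(\phi)\|\le C\la^{3\frac{1-q}{q}-\eta'}\|\phi\|^2$, the exponent $3\frac{1-q}{q}$ being the dominant (most negative) one. Choosing $q$ close enough to $1$ so that $3\frac{1-q}{q}\ge-\eta$, together with $\eta'\le\eta$, yields $3\frac{1-q}{q}-\eta'\ge-2\eta$, which is exactly \eqref{B21}.

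For the Lipschitz estimate \eqref{B22} I would start from $N(\phi)-N(\psi)=i^*_q\big(F(W_\la+\phi)-F(W_\la+\psi)-F'(W_\la)(\phi-\psi)\big)$ and expand the bracket as a double integral of $F''$. Setting $\theta_t=(1-t)\psi+t\phi$, the fundamental theorem of calculus gives
$$ F(W_\la+\phi)-F(W_\la+\psi)-F'(W_\la)(\phi-\psi)=\int_0^1\!\!\int_0^1 F''(W_\la+s\theta_t)[\theta_t,\phi-\psi]\,ds\,dt.$$
Since $\|s\theta_t\|\le\|\theta_t\|\le r_0$ and $\|\theta_t\|\le\|\phi\|+\|\psi\|$, applying Lemma \ref{aux2}(c)--(d) with $u=s\theta_t$, $v=\theta_t$, $z=\phi-\psi$ produces, exactly as before, the bound $\|N(\phi)-N(\psi)\|\le C\la^{3\frac{1-q}{q}-\eta'}(\|\phi\|+\|\psi\|)\|\phi-\psi\|$, and the same choice of $q$ and $\eta'$ turns this into \eqref{B22}.

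The computations are routine once Lemma \ref{aux2} is in hand; the only delicate point is the bookkeeping of the exponents. The negative powers of $\la$ carried by the bounds on $f''$ and $g''$ (originating from the concentration of $e^{w_1}$ and $|x|^2e^{w_2}$) must be absorbed into the prescribed factor $\la^{-2\eta}$, and this is possible precisely because $i^*_q=(-\Delta)^{-1}$ is independent of $q$, so we may let $q\downarrow 1$ and make $\frac{1-q}{q}$ as small as we wish. I expect this exponent bookkeeping, rather than any genuine analytic difficulty, to be the main thing to get right.
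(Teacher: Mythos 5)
Your proposal is correct and follows essentially the same route as the paper: both reduce the quadratic remainder of $F$ at $W_\la$ to bounds on $F''$ via Lemma \ref{aux2}(c)--(d), exploit that $i^*_p$ is just $(-\Delta)^{-1}$ so that $\|N(\phi)-N(\psi)\|\le c_p\|\cdot\|_p$ for any convenient $p$, and absorb the negative powers $3\frac{1-p}{p}$ by taking $p$ close to $1$. The only cosmetic differences are that you use the integral (Taylor/FTC) form of the remainder where the paper invokes the mean value theorem twice, and that you prove \eqref{B21} directly while the paper obtains it from \eqref{B22} with $\psi=0$.
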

\begin{proof} Let us remark that (\ref{B21}) follows by choosing
$\psi =0 $ in (\ref{B22}) . Let us prove (\ref{B22}). First of
all, we point out that for any $p>1$
$$\left\| {N} (\phi )-N(\psi)\right\| \le c_{p}\left\| F (W_\la+\phi )-F (W_\la+\psi )-F' (W_\la)(\phi-\psi)\right\| _{p}.$$
We apply the mean value theorem (\cite[Theorem 1.8]{aprodi}) to
the map: $\varphi \mapsto F(\varphi + {W}_\la) - F'({W}_\la)\, \varphi \in
L^p(\Omega)\times L^p(\Omega)$, with $\varphi \in H_0^1(\Omega)\times H^1_0(\Omega)$. Then, there exists
$\theta \in (0,1)$ such that

$$ \| F (W_\la+ \phi )-F (W_\la+ \psi )-F'(W_\la) (\phi-\psi)\|_{p} \leq \|[F'(W_\la+\theta \phi+(1-\theta)\psi)-F'(W_\la)](\phi-\psi)\|_{p}.$$

We apply again the mean value theorem to the map $\varphi \mapsto
F'(\varphi + W_\la)(\phi- \psi)$; there exists $\sigma \in
(0,1)$ such that
\begin{align*}& \| \[F'(W_\la+\theta \phi+(1-\theta)\psi)-F'(W_\la)\](\phi-\psi)\|_{p} \\ &\leq  \| F''(W_\la+\sigma (\theta \phi+(1-\theta)\psi))(  \theta \phi+(1-\theta)\psi) (\phi-\psi)\|_{p}.\end{align*}
Taking into account that  $$F''(u)=(2\rho g''(u_1)-\la f''(u_2), 2\la f''(u_2)-\rho g''(u_1)),$$
Lemma \ref{aux2} allows us to conclude by choosing $p$ sufficiently close to 1.

\end{proof}

Now we are able to solve problem \eqref{goal}.
\begin{prop}\label{phi}
For any $\e\in (0,\frac14)$ there exists $\lambda_0>0$    such that
for any $\la \in(0, \la_0)$ there is a unique $\phi_\la \in {\cal H}_k
\times {\cal H}_k$ satisfying \eqref{goal} and
$$
\|     \phi_\la\|\leq \la^{\frac14-\e} .$$

\end{prop}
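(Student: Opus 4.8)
The plan is to solve \eqref{goal} by a fixed-point argument in the ball $B_\la := \{ \phi \in {\cal H}_k \times {\cal H}_k : \|\phi\| \le \la^{1/4 - \e}\}$. Since Proposition \ref{ex} guarantees that $L$ is invertible with $\|L^{-1}\| \le C |\log \la|$, I would rewrite \eqref{goal} in the equivalent fixed-point form
$$\phi = L^{-1}\big( \tilde{R}_\la - N(\phi) \big) =: \mathcal{T}(\phi),$$
and show that $\mathcal{T}$ is a contraction mapping of $B_\la$ into itself. The existence and uniqueness of $\phi_\la$ then follow from the Banach fixed-point theorem.

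The first step is to control the size of $\tilde R_\la$. Recall $\tilde R_\la = i_p^*(R_\la)$, so by the continuity estimate \eqref{isp} and Lemma \ref{error} we obtain, for a fixed $p \in (1,2)$,
$$\|\tilde R_\la\| \le c_p \|R_\la\|_p = O\big(\la^{\frac14 \frac{2-p}{p}}\big).$$
The second step is to estimate the contribution of the nonlinear term using Lemma \ref{B2}: for $\phi, \psi \in B_\la$ and any $\eta>0$,
$$\|\mathcal{T}(\phi) - \mathcal{T}(\psi)\| \le \|L^{-1}\| \, \|N(\phi) - N(\psi)\| \le C |\log\la| \, \la^{-2\eta} \|\phi - \psi\| \big(\|\phi\| + \|\psi\|\big).$$
Since $\|\phi\| + \|\psi\| \le 2 \la^{1/4-\e}$, the contraction factor is $O(|\log\la| \, \la^{1/4 - \e - 2\eta})$, which tends to $0$ as $\la \to 0^+$ once $\eta$ is chosen small enough; thus $\mathcal{T}$ is a contraction for $\la$ small.

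For the self-mapping property, I would combine the two estimates: for $\phi \in B_\la$,
$$\|\mathcal{T}(\phi)\| \le \|L^{-1}\|\big( \|\tilde R_\la\| + \|N(\phi)\|\big) \le C|\log\la| \big( \la^{\frac14\frac{2-p}{p}} + \la^{-2\eta} \la^{2(\frac14-\e)} \big).$$
The key point is that by taking $p$ close enough to $1$ the exponent $\tfrac14 \tfrac{2-p}{p}$ can be made arbitrarily close to $\tfrac14$, and the second exponent $\tfrac12 - 2\e - 2\eta$ exceeds $\tfrac14 - \e$ for $\e < \tfrac14$ and small $\eta$; hence for $\la$ small the right-hand side is dominated by $\la^{1/4-\e}$, so $\mathcal{T}(B_\la) \subset B_\la$. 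The main obstacle — and the place where the logarithmic loss from $\|L^{-1}\|$ must be absorbed — is the careful bookkeeping of exponents: one must verify that the powers of $\la$ coming from $\tilde R_\la$ and $N(\phi)$ strictly beat $\la^{1/4-\e}$ even after multiplication by $|\log\la|$ and by the $\la^{-2\eta}$ factor. This is arranged by first fixing $\e$, then choosing $\eta$ small and $p$ sufficiently close to $1$, in that order, so that all the error exponents stay above $\tfrac14 - \e$.
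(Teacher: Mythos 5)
Your proposal is correct and follows essentially the same route as the paper: the contraction mapping $T(\phi)=L^{-1}\big(\tilde R_\la - N(\phi)\big)$ on the ball $\{\|\phi\|\le \la^{1/4-\e}\}$, with $\|\tilde R_\la\|$ controlled via \eqref{isp} and Lemma \ref{error}, the nonlinearity via Lemma \ref{B2}, and the invertibility bound $\|L^{-1}\|\le C|\log\la|$ from Proposition \ref{ex}. Your parameter bookkeeping (choose $\e$ first, then $\eta$ small, then $p$ close to $1$) matches the paper's choice $0<\eta<\min\{\e,\tfrac18-\tfrac{\e}{2}\}$, so the logarithmic loss is absorbed exactly as in the original argument.
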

\begin{proof} Problem \eqref{goal} can be solved via a contraction mapping argument.
Indeed, in virtue of Proposition \ref{ex}, we can introduce the map
$$T(\phi):=L^{-1}\big(\tilde R_\la- N(     \phi)\big),\;\;\, \phi\in{\cal  H}_k \times {\cal H}_k.$$
 By \eqref{isp} and Lemma \ref{error}, recalling that $\tilde{R}_\la= i_{p}^*({R}_\la)$, we have:
$$\|\tilde{R}_\la\|=O(\la^{\frac14\frac{2-p}{p}})\quad \forall p>1,$$
 or, equivalently, \beq\label{estierror}\|\tilde{R}_\la\|=O(\la^{\frac14-\eta})\quad \forall \eta>0.\eeq
We claim that $T$ is a contraction map over the ball
\beq\label{ball}\left\{\phi\in {\cal H}_k \times {\cal H}_k\ :\ \|\phi\|\le
  \la^{\frac14-\e}\right\}\eeq provided  $\la$ is small enough. Indeed, using \eqref{estierror} and Lemma \ref{B2}, fixed $0<\eta<\min\{\e,\frac18-\frac{\e}{2}\}$, we have
$$\|T(\phi)\|\leq C|\log\la |(\la^{\frac 14 -\eta}+ \la^{-2\eta} \|\phi\|^2) < \la^{\frac14-\e}$$ and
$$\|T(\phi)-T(\psi)\|\leq C|\log\la| \la^{-2\eta } (\|\phi \|+\|\psi \|) \|\phi-\psi\|< \frac 12 \|\phi-\psi\|.$$
\end{proof}

\bigskip

\begin{proof}[Proof of Theorem \ref{main}.] It follows immediately
from Proposition \ref{phi}, since, as we have already observed, problem \eqref{s} is equivalent to  \eqref{goal}.

\end{proof}

\begin{proof}[Proof of Theorem \ref{teo}.] By \cite{blin, suzuki}
(see remark \ref{ok}), assumption \ref{H} is satisfied, and hence
Theorem \ref{main} provides us with a solution ${u}_\la=W_\la+\phi_\la$ of \eqref{eq:e-1} with
$$ \rho_1=\rho,\quad \rho_2={\rho_2}_\la =\la \into e^{{u_2}_\la} dx .$$
 Clearly, by \eqref{change} and \eqref{answ},

$$ {v_1}_\la = \frac{2 {u_1}_\la+{u_2}_\la}{3} = \frac 1 2 (P w_1 + z) + o(1), \ \ {v_2}_\la = \frac{2 {u_2}_\la+{u_1}_\la}{3} = \frac 1 2 P
w_2 + o(1)$$ in the $H^1$-sense and  the expansions of Theorem \ref{teo}
follow from \eqref{pro-exp}, recalling also \eqref{delta12}. Moreover, using H\"older's inequality with $\frac1a+\frac1b+\frac1c=1$ and \eqref{hola2}, 
$$\begin{aligned}\|e^{{u_1}_\la}- e^{{W_1}_\la}\|_1&=\|e^{{W_1}_\la+{\phi_1}_\la}-e^{{W_1}_\la }\|_{1}=\into e^{{W_1}_\la}|e^{{\phi_1}_\la}-1|dx\le \into e^{{W_1}_\la}e^{|{\phi_1}_\la|}|{\phi_1}_\la|dx\\ &\le C \|e^{{W_1}_\la}\|_{a} \|e^{{\phi_1}_\la}\|_{b} \|{\phi_1}_\la\|_{c} =o(1), \end{aligned}$$ if $a$ is chosen sufficiently close to 1.
Similarly,
%$$\|e^{{u_1}_\la}- e^{{W_1}_\la}\|_1=\|e^{{W_1}_\la+{\phi_1}_\la}- e^{{W_1}_\la}\|_1=o(1),$$
$$\|\la e^{{u_2}_\la}- \la e^{{W_2}_\la}\|_1=\|\la e^{{W_2}_\la+{\phi_2}_\la}- \la e^{{W_2}_\la}\|_1=o(1).$$
Then, by
Lemma \ref{aux}, for every $r>0$

$$\begin{aligned} \frac{{\rho_1}}{\int_{\Omega} e^{{u_1}_\la}}\int_{B(0,r)} e^{{u_1}_\la}dx&=
\frac{{\rho}}{\int_{\Omega} e^{{W_1}_\la}}\int_{B(0,r)}  e^{{W_1}_\la}dx+ o(1) \\
&= \frac 1 2 \int_{B(0,r)}  e^{w_1}dx + \frac{\rho-4\pi}{\into e^z} \int_{B(0,r)}
e^{z} dx+ o(1) \\ &\to 4 \pi + \frac{\rho-4\pi}{\into e^z } \int_{B(0,r)}
e^zdx
\end{aligned}$$ as $\la \to 0$. If we now make $r \to 0$, we obtain
that $\sigma_1 = 4 \pi$. Analogously, if we use Lemma \ref{aux},

\begin{align*} \lambda\int_{B(0,r)}  e^{{u_2}_\la} dx=\lambda
\int_{B(0,r)}  e^{{W_2}_\la}dx+ o(1) = \frac 1 2 \int_{B(0,r)}
|x|^2 e^{w_2} dx+ o(1) \to
8 \pi,
\end{align*}
as $\la \to 0$, by which ${\sigma_2}= 8\pi$. Moreover, in this case there is no global mass since, again by Lemma \ref{aux},
\begin{align*} {\rho_2}_\la= \lambda\int_{\Omega}  e^{{u_2}_\la}dx =
 \frac 1 2 \int_{\Omega} |x|^2
e^{w_2}dx + o(1) \to 8 \pi.
\end{align*}
This concludes the proof.
\end{proof}

\end{document}